\newcommand{\papertitle}{Boundedness of Maximal Operators of Schr\"{o}dinger Type with Complex Time}
\definecolor{linkcolor}{rgb}{0.2,0,0.5}
\theoremstyle{plain}
 \newtheorem{thm}{Theorem}[section] 
 \newtheorem*{thm*}{Theorem}
 \newtheorem{lemma}[thm]{Lemma}
 \newtheorem*{prop*}{Proposition}
\dayofweekname{\THEDAY}{\THEMONTH}{\THEYEAR}  {\ordinal{DAY}} \monthname[\THEMONTH] \THEYEAR}
\renewcommand{\|}{\displaystyle}
\renewcommand{\(}{\left(}
\renewcommand{\)}{\right)}
\newcommand{\fhat}{\widehat{f}}
\renewcommand{\Gamma}{\varGamma}
\renewcommand{\epsilon}{\varepsilon}
\renewcommand{\bar}{\overline}
\renewcommand{\leq}{\leqslant}
\renewcommand{\geq}{\geqslant}
\DeclareMathOperator{\sgn}{sgn}
\newcommand{\supcite}[2][]{%
  \ifthenelse{\isempty{#1}}%
    {$^{\textrm{\cite{#2}}}$}%
    {$^{\textrm{\cite[#1]{#2}}}$}%
}
\newcommand{\notlabel}[1]{} 
\DeclareMathOperator{\loc}{loc}
\begin{document}
\title[\papertitle]{\papertitle}
\author[]{Andrew D. Bailey}
\address{Andrew D. Bailey, School of Mathematics, Watson Building, University of Birmingham, Edgbaston, Birmingham, B15 2TT, United Kingdom.}
\email{baileya@maths.bham.ac.uk}
\nnfoottext{Last updated on {\today} at \currenttime.\\
2010 Mathematics Subject Classification: 42B15, 42B25 (primary); 42B37 (secondary).\\
Keywords: Schr\"{o}dinger maximal operator, Schr\"odinger equation, KdV equation, complex time, boundedness, Sobolev space, pointwise convergence.\\
The author was supported by an EPSRC doctoral training grant.}
\begin{abstract}
Results of P. Sj\"olin and F. Soria on the Schr\"odinger maximal operator with complex-valued time are improved by determining up to the endpoint the sharp $s \geq 0$ for which boundedness from the Sobolev space $H^s(\mathbb{R})$ into $L^2(\mathbb{R})$ occurs.  Bounds are established for not only the Schr\"odinger maximal operator, but further for a general class of maximal operators corresponding to solution operators for certain dispersive PDEs.  As a consequence of additional bounds on these maximal operators from $H^s(\mathbb{R})$ into $L^2([-1, 1])$, sharp results on the pointwise almost everywhere convergence of the solutions of these PDEs to their initial data are determined.
\end{abstract}
\maketitle
\section{Introduction} \label{secintroprob}\thispagestyle{empty}
For $a > 1$, time parameter $t > 0$ and spatial variable $x \in \mathbb{R}$, define the following operator acting on functions $f$ in the Schwartz class, $\mathcal{S}(\mathbb{R})$:
\begin{equation*}
S^t_a f(x) \coloneq \int_{\mathbb{R}} \fhat(\xi) e^{i t |\xi|^a} e^{i x \xi} \, d\xi.
\end{equation*}
This operator gives the solution to the dispersive equation
\begin{equation*}
i \partial_t u(t, x) + (-\Delta_x)^{\frac{a}{2}} u(t, x) = 0
\end{equation*}
in one spatial dimension and one temporal dimension with initial data $u(0, x) = 2\pi f(x)$.  For $a = 2$, it corresponds to the solution operator for the Schr\"odinger equation.

The boundedness of the maximal operator $\| S^{*}_a f \coloneq \sup_{t \in (0, 1)} | S^t_a f |$ was considered by L. Carleson in the case of $a = 2$ in \cite{carleson}, motivated by the problem of determining what class of functions the operator $S^t_2$ can be defined on such that for all functions $f$ in this class, $\| \lim_{t \rightarrow 0} S^t_2 f(x) = f(x)$ for almost every $\| x \in \mathbb{R}$.  Whilst Carleson considered the problem in the context of H\"older continuous functions, an immediate consequence of his work was that when $f$ is taken in the Sobolev space $\| H^{\frac{1}{4}}(\mathbb{R})$, the following operator bound may be established:
\begin{equation*}
\Vert S^{*}_2 f \Vert_{L^2([-1, 1])} \lesssim \Vert f \Vert_{H^{\frac{1}{4}}(\mathbb{R})}.
\end{equation*}
Here and throughout this paper, the symbol $\lesssim$ is used to signify that the left hand side is bounded above by a constant multiple of the right hand side, with this constant independent of $f$.  In what follows, $A \approx B$ will be used to signify that $A$ is equal to a constant multiple of $B$, whilst $A \sim B$ will be used to signify that $A \lesssim B$ and $B \lesssim A$ both hold.

In \cite{dahlbergkenig}, B. Dahlberg and C. Kenig showed by exhibiting a counterexample that Carleson's result is sharp in the sense that for any positive $s < \frac{1}{4}$, there exists a function $\| f \in H^s$ such that $\| \lim_{t \rightarrow 0} |S^t_2 f(x)| = \infty$ for almost every $\| x \in \mathbb{R}$. As a consequence, $H^{\frac{1}{4}}$ in Carleson's maximal estimate can not be replaced with $H^s$ for any $s < \frac{1}{4}$.

The problem of boundedness of the Schr\"odinger maximal operator was considered in the context of global $\| L^2$ norms by L. Vega in \cite{vegatesis} where it was shown that
\begin{equation*}
\Vert S^{*}_2 f \Vert_{L^2(\mathbb{R})} \lesssim \Vert f \Vert_{H^s(\mathbb{R})}
\end{equation*}
for any $s > \frac{1}{2}$ and that this estimate fails for $s < \frac{1}{2}$.  The problem of boundedness for $s = \frac{1}{2}$ remains open.

It is a simple observation that if the definition of the solution operator for the Schr\"odinger equation is extended to allow complex-valued time with positive imaginary part, then for $t \geq 0$, the operator $S^{it}_2$ is the solution operator for the heat equation,
\begin{equation*}
\partial_t u(t, x) = \partial_x^2 u(t, x).
\end{equation*}
By observing that the multiplier associated with $S_2^{it}$ is a Gaussian, it follows from the boundedness of the Hardy--Littlewood maximal function that
\begin{equation*}
\Vert \sup_{t \in (0, 1)} |S^{it}_2 f| \Vert_{L^2(\mathbb{R})} \lesssim \Vert f \Vert_{L^2(\mathbb{R})}.
\end{equation*}

Given this result and the result of Vega, it is natural to consider intermediate results by asking for which $s > 0$ and which maps $g : [0, 1] \rightarrow [0, 1]$ with $\| \lim_{t \rightarrow 0} g(t) = 0$ it can be said that
\begin{equation*}
\Vert \sup_{t \in (0, 1)} |S^{t + ig(t)}_2 f | \Vert_{L^2(\mathbb{R})} \lesssim \Vert f \Vert_{H^s(\mathbb{R})}.
\end{equation*}

This interesting question was posed and partially answered by P. Sj\"{o}lin in \cite{sjolin}.  For $t$, $\gamma > 0$, he considered the operator
\begin{equation*}
P^t_{2, \gamma} f(x) \coloneq S^{t + it^{\gamma}}_2 f(x) = \int_{\mathbb{R}} \fhat(\xi) e^{i t \xi^2} e^{-t^{\gamma} \xi^2} e^{ix\xi} \, d\xi
\end{equation*}
with corresponding maximal operator $\| P^{*}_{2, \gamma} f \coloneq \sup_{t \in (0, 1)} |P^t_{2, \gamma} f|$.  Denoting by $s_2(\gamma)$ the infimum of the values of $s > 0$ such that the estimate $\Vert P^{*}_{2, \gamma} f \Vert_{L^2(\mathbb{R})} \lesssim \Vert f \Vert_{H^s(\mathbb{R})}$ holds, the following is established by Sj\"olin in \cite{sjolin} and in \cite{sjolinsoria} together with F. Soria:
\renewcommand{\labelenumi}{(\roman{enumi})}
\begin{thm} \label{sjolinsoriathm}\ 
\vspace{-0.3cm}
\begin{enumerate}
\item For $\gamma \in (0, 1]$, $s_2(\gamma) = 0$;
\item For $\gamma \in (1, 2)$, $s_2(\gamma) \in [0, \frac{1}{2} - \frac{1}{2\gamma}]$;
\item $s_2(2) = \frac{1}{4}$;
\item For $\gamma \in (2, 4]$, $s_2(\gamma) \in [\frac{1}{4}, \frac{1}{2} - \frac{1}{2\gamma}]$;
\item For $\gamma \in (4, \infty)$, $s_2(\gamma) \in [\frac{1}{2} - \frac{1}{\gamma}, \frac{1}{2} - \frac{1}{2\gamma}]$.
\end{enumerate}
\end{thm}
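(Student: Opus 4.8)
The plan is to treat the boundedness (``positive'') half, which yields the upper bounds in all of (i)--(v), and the counterexample (``negative'') half separately, both organised around a dyadic frequency decomposition $\hat f = \sum_\lambda \hat f_\lambda$ over dyadic $\lambda \geq 1$ (plus a harmless low-frequency piece). The guiding observation for the positive half is that on $\{|\xi| \sim \lambda\}$ the damping $e^{-t^\gamma \xi^2}$ is comparable to $1$ for $t \lesssim \lambda^{-2/\gamma}$ and exponentially small beyond that, so $P^t_{2,\gamma}$ effectively coincides with the Schr\"odinger propagator $S^t_2$ on the short time window $(0, \lambda^{-2/\gamma}]$. I would split $(0,1)$ into $(0, \lambda^{-2/\gamma}]$ and the dyadic blocks $t \sim 2^k \lambda^{-2/\gamma}$, $k \geq 0$: on the first block the heat symbol is a uniformly bounded smooth bump multiplier on $\{|\xi| \sim \lambda\}$ and can be discarded at the cost of an absolute constant, while on the $k$-th block it is pointwise $\lesssim e^{-c 2^{k\gamma}}$, which more than absorbs the $k$-growth of the Schr\"odinger maximal bound over the longer window. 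On the first block, applying the parabolic rescaling $(t,x) \mapsto (\lambda^{-2/\gamma} t,\, \lambda^{-1/\gamma} x)$ converts ``$S^t_2$ at frequency $\lambda$ over time $(0, \lambda^{-2/\gamma}]$'' into ``$S^t_2$ at frequency $\Lambda := \lambda^{1-1/\gamma} \geq 1$ over time $(0,1]$''; feeding in the frequency-localised Schr\"odinger maximal estimate $\Vert \sup_{0<t<1} |S^t_2 h_\Lambda| \Vert_{L^2(\mathbb{R})} \lesssim \Lambda^{1/2} \Vert h_\Lambda \Vert_2$ (classical, and contained in Vega's theorem up to a harmless $\epsilon$-loss) and tracking the Jacobians gives $\Vert P^*_{2,\gamma} f_\lambda \Vert_{L^2(\mathbb{R})} \lesssim \lambda^{\frac12 - \frac1{2\gamma}} \Vert f_\lambda \Vert_2$, with exponent $\leq 0$ --- hence no loss --- when $\gamma \leq 1$.

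Summing is then routine: $P^*_{2,\gamma} f \leq \sum_\lambda P^*_{2,\gamma} f_\lambda$ pointwise, so Cauchy--Schwarz against the $H^s$-weight yields $\Vert P^*_{2,\gamma} f \Vert_{L^2(\mathbb{R})} \lesssim \Vert f \Vert_{H^s(\mathbb{R})}$ for every $s > \max(0,\, \tfrac12 - \tfrac1{2\gamma})$. This is exactly the upper-bound content of (i)--(v); with the trivial bound $s_2(\gamma) \geq 0$ it settles (i) and (ii), and it reduces (iii)--(v) to matching lower bounds.

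For those I would use the test functions $\hat f = \chi_{[\lambda, \lambda+\mu]}$, for which $\Vert f \Vert_{H^s} \sim \lambda^s \mu^{1/2}$. For $x$ in an interval of length $\sim T\lambda$, the choice $t(x) = -x/(2\lambda)$ makes the phase $t\xi^2 + x\xi$ stationary at $\xi = \lambda$; provided $T \lesssim \min(\mu^{-2},\, \lambda^{-2/\gamma})$ --- the first condition suppressing the quadratic part of the phase on $[\lambda, \lambda+\mu]$, the second keeping the damping $\gtrsim 1$ there --- one has $|P^{t(x)}_{2,\gamma} f(x)| \gtrsim \mu$, so $\Vert P^*_{2,\gamma} f \Vert_{L^2(\mathbb{R})}^2 \gtrsim \mu^2 T \lambda$; letting $\lambda \to \infty$ with $T = \min(\mu^{-2},\, \lambda^{-2/\gamma})$ then forces a lower bound on $s$ depending on $\mu$. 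Taking $\mu = \lambda^{1/2}$ gives $s \geq \tfrac14$: this is the lower bound in (iii), so with the upper bound $\tfrac12 - \tfrac1{2\gamma}$, which equals $\tfrac14$ at $\gamma = 2$, one gets $s_2(2) = \tfrac14$; and since for $\gamma > 2$ the damping constraint is then slack, the same choice gives $\tfrac14$ throughout (iv). Taking instead $\mu = \lambda^{2/\gamma}$ makes the quadratic-phase constraint binding, and precisely when $\gamma > 4$ --- so that the good $x$-interval (of length $\sim \lambda^{1-4/\gamma}$) is non-degenerate --- it gives $s \geq \tfrac12 - \tfrac1\gamma$, the lower bound in (v). (Balancing the two constraints with $\mu = \lambda^{1/\gamma}$ would instead give $s \geq \tfrac12 - \tfrac1{2\gamma}$ and close the gaps in (ii), (iv), (v) --- that sharpening is the improvement to be carried out later in the paper --- but the weaker choices above already prove the theorem as stated; the lower bound $0$ in (i), (ii) is trivial.)

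The main obstacle is the frequency-localised Schr\"odinger maximal estimate invoked above. The naive alternative --- bounding $\sup_{0<t<1} |P^t_{2,\gamma} f_\lambda|^2$ by $2\int_0^1 |P^t_{2,\gamma} f_\lambda|\, |\partial_t P^t_{2,\gamma} f_\lambda|\, dt$ plus an initial term and then applying Plancherel in $x$ --- loses a full power and yields only $s_2(\gamma) \leq 1 - \tfrac1\gamma$, twice the correct exponent; extracting the square-root gain genuinely requires the oscillation of $e^{it\xi^2}$, e.g.\ via a $TT^*$ argument in which the kernel $\int_{|\xi| \sim \lambda} e^{i((t(x) - t(y))\xi^2 + (x-y)\xi)}\, e^{-(t(x)^\gamma + t(y)^\gamma)\xi^2}\, d\xi$ is estimated by (complex) stationary phase, the Gaussian damping being exactly what makes the resulting kernel bounds summable. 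A secondary point needing care is that the good $x$-interval in the balanced counterexample has length $\sim \lambda^{1-2/\gamma}$, which degenerates as $\gamma \downarrow 1$, so in each of the ranges (ii)--(v) one must check that the interval actually used is non-empty --- which is precisely why the case division in the statement appears.
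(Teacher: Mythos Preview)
The paper does not give its own proof of Theorem~\ref{sjolinsoriathm}; it is quoted as prior work of Sj\"olin and Sj\"olin--Soria and then superseded by Theorem~\ref{mainthm}. So the natural comparison is to the paper's proof of Theorem~\ref{mainthm} for $a=2$, and there your route is genuinely different. You use a Littlewood--Paley decomposition and parabolic rescaling to reduce each frequency block to the real-time Schr\"odinger maximal estimate over $(0,1)$, which you then import from Vega's theorem as a black box; the paper instead runs a self-contained $TT^*$/Kolmogorov--Seliverstov--Plessner argument, reducing everything to the $L^1$ kernel bound of Lemma~\ref{mainlemma} and proving that directly by Van der Corput with the damping built into the oscillatory-integral analysis. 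Your approach is more modular and makes the link to the real-time problem transparent --- indeed, as you observe, the balanced choice $\mu=\lambda^{1/\gamma}$ in your counterexample already yields the sharp Theorem~\ref{mainthm} exponent $\tfrac12-\tfrac1{2\gamma}$, not merely the weaker Theorem~\ref{sjolinsoriathm} bounds. The paper's approach, by contrast, does not rely on any pre-existing maximal theorem, generalises uniformly to all $a>1$ without needing Sj\"olin's real-time result for $S^*_a$ as input, and yields the finer endpoint and local information exploited in Section~\ref{furtherremarks}.

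One technical point in your sketch deserves care: the damping $e^{-t^\gamma\xi^2}$ depends on $t$, so on neither the main block $(0,\lambda^{-2/\gamma}]$ nor the tail blocks can it literally be ``discarded at the cost of an absolute constant'' before taking the supremum in $t$. The clean fix is the convolution/Hardy--Littlewood trick of Lemma~\ref{boundinglemma}: write $P^t_{2,\gamma}f_\lambda$ as $(S^t_2 g)*K_{r(t)}$ and dominate the supremum by $M(\sup_t|S^t_2 g|)$; on the $k$-th tail block, take $g$ to be $f_\lambda$ with the left-endpoint damping $e^{-t_k^\gamma\xi^2}$ already absorbed, so that $\|g\|_2\leq e^{-c2^{k\gamma}}\|f_\lambda\|_2$ and the exponential gain survives into the maximal estimate. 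With this adjustment your scheme goes through as written.
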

Sj\"olin's original work in \cite{sjolin} established cases (i) and (iii), as well as (v) with an upper bound of $\frac{1}{2}$ instead of $\frac{1}{2} - \frac{1}{2\gamma}$.  Cases (ii) and (iv) and the improvement of case (v) were established in \cite{sjolinsoria} with Soria.

In a paper from 1994, \cite{sjolinreal}, Sj\"olin also established that for any $a > 1$, the infimum of the values of $s > 0$ for which
\begin{equation*}
\Vert S^{*}_a f \Vert_{L^2(\mathbb{R})} \lesssim \Vert f \Vert_{H^s(\mathbb{R})}
\end{equation*}
holds is $\frac{a}{4}$.  As such, in addition to the problem of fully determining $s_2(\gamma)$ in cases (ii), (iv) and (v), it is natural to consider whether the scope of Theorem \ref{sjolinsoriathm} can be extended to $a \neq 2$.  To this end, for $t$, $\gamma > 0$ and $a > 1$, define in the natural way,
\begin{equation*}
P^t_{a, \gamma} f(x) \coloneq S^{t + it^{\gamma}}_a f(x) = \int_{\mathbb{R}} \fhat(\xi) e^{i t |\xi|^a} e^{-t^{\gamma} |\xi|^a} e^{ix\xi} \, d\xi
\end{equation*}
with corresponding maximal operator $\| P^{*}_{a, \gamma} f \coloneq \sup_{t \in (0, 1)} |P^t_{a, \gamma} f|$ and let $s_a(\gamma)$ denote the infimum of the values of $s > 0$ such that the estimate $\Vert P^{*}_{a, \gamma} f \Vert_{L^2(\mathbb{R})} \lesssim \Vert f \Vert_{H^s(\mathbb{R})}$ holds.  Letting $x^{+}$ denote the maximum of $0$ and $x$ for each $x \in \mathbb{R}$, in this paper, the following is established:
\begin{thm} \label{mainthm}
For $\gamma \in (0, \infty)$ and $a > 1$, $s_a(\gamma) = \frac{1}{4} a \(1 - \frac{1}{\gamma}\)^{+}$.
\end{thm}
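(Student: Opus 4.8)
The plan is to prove the two matching estimates $s_a(\gamma)\le\frac14 a(1-\frac1\gamma)^{+}$ and $s_a(\gamma)\ge\frac14 a(1-\frac1\gamma)^{+}$. For the upper bound when $\gamma\le 1$, where the asserted value is $0$, I would show the stronger statement that $P^{*}_{a,\gamma}$ is bounded on $L^2(\mathbb{R})$: writing $P^t_{a,\gamma}f = K^t*f$ with $\widehat{K^t}(\xi)=e^{(it-t^\gamma)|\xi|^a}$, the substitution $\xi\mapsto t^{-\gamma/a}\xi$ realises $K^t$ as the $L^1$-preserving dilate by $t^{\gamma/a}$ of the profile $\big(e^{-(1-it^{1-\gamma})|\cdot|^a}\big)^{\vee}$; since $1-\gamma\ge 0$ forces $t^{1-\gamma}\in(0,1]$ for $t\in(0,1)$, the exponent $1-it^{1-\gamma}$ stays in a fixed sector about the positive real axis, so this family of profiles is bounded in $L^1(\mathbb{R})$ and dominated by a single integrable radially decreasing function. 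Hence $\sup_{t\in(0,1)}|P^t_{a,\gamma}f|\lesssim Mf$, the Hardy--Littlewood maximal function of $f$, and $\Vert P^{*}_{a,\gamma}f\Vert_{L^2(\mathbb{R})}\lesssim\Vert f\Vert_{L^2(\mathbb{R})}\le\Vert f\Vert_{H^s(\mathbb{R})}$ for every $s>0$, giving $s_a(\gamma)=0$.

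For the upper bound when $\gamma>1$, write $\sigma:=\frac14 a(1-\frac1\gamma)>0$, fix $\epsilon>0$, and Littlewood--Paley decompose $f=\sum_k f_k$ with $\widehat{f_k}$ supported where $|\xi|\sim 2^k$; it suffices to prove $\Vert\sup_{t\in(0,1)}|P^t_{a,\gamma}f_k|\Vert_{L^2(\mathbb{R})}\lesssim_\epsilon 2^{(\sigma+\epsilon)k}\Vert f_k\Vert_{L^2}$, since summing this against $\Vert f_k\Vert_{L^2}\le 2^{-sk}\Vert f_k\Vert_{H^s}$ by Cauchy--Schwarz gives the sought estimate for all $s>\sigma+\epsilon$, hence for all $s>\sigma$ (the bounded frequency blocks being handled directly via the fundamental theorem of calculus in $t$). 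Let $\delta_k:=2^{-ka/\gamma}$ be the time at which the damping $e^{-t^\gamma|\xi|^a}$ first becomes effective on the $k$-th block, and decompose $(0,1)$ into $(0,\delta_k)$ together with the dyadic windows $I_{k,j}:=(2^j\delta_k,2^{j+1}\delta_k)$, $0\le j\lesssim ka/\gamma$. On $I_{k,j}$ one has $t^\gamma|\xi|^a\gtrsim 2^{j\gamma}$ on the block, so factoring $P^t_{a,\gamma}f_k=\mu_{k,t}*S^t_a f_k$ with $\widehat{\mu_{k,t}}(\xi)=e^{-t^\gamma|\xi|^a}\widetilde{\psi}(2^{-k}\xi)$ (a fattened Littlewood--Paley cutoff) and integrating by parts shows that $\mu_{k,t}$ is $e^{-c2^{j\gamma}}$ times an $L^1$-normalised bump, whence $\sup_{t\in I_{k,j}}|P^t_{a,\gamma}f_k|\lesssim e^{-c2^{j\gamma}}\,M\big(\sup_{\tau\in I_{k,j}}|S^\tau_a f_k|\big)$, and likewise $\sup_{t\in(0,\delta_k)}|P^t_{a,\gamma}f_k|\lesssim M\big(\sup_{\tau\in(0,\delta_k)}|S^\tau_a f_k|\big)$. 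Rescaling time by $t=2^j\delta_k\tau$ and frequency by $\xi=(2^j\delta_k)^{-1/a}\eta$ converts $\sup_{\tau\in I_{k,j}}|S^\tau_a f_k|$ into a spatial dilate of $\sup_{\tau\in(1,2)}|S^\tau_a h|$, where $\widehat{h}$ is supported where $|\eta|\sim(2^j\lambda_k)^{1/a}$, $\lambda_k:=\delta_k 2^{ka}=2^{ka(1-1/\gamma)}$, and the $L^2$ norms of $h$ and of the dilate match so that the rescaling factors cancel; the group law $S^{1+\tau}_a=S^\tau_a S^1_a$ together with Sj\"olin's global bound $\Vert S^{*}_a g\Vert_{L^2(\mathbb{R})}\lesssim\Vert g\Vert_{H^{a/4+\epsilon}}$ then give $\Vert\sup_{\tau\in I_{k,j}}|S^\tau_a f_k|\Vert_{L^2}\lesssim(2^j\lambda_k)^{1/4+\epsilon}\Vert f_k\Vert_{L^2}$, and likewise $\lambda_k^{1/4+\epsilon}\Vert f_k\Vert_{L^2}$ on $(0,\delta_k)$. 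Summing over $j$ and including the $(0,\delta_k)$ term, $\sum_{j\ge 0}e^{-c2^{j\gamma}}(2^j\lambda_k)^{1/4+\epsilon}\lesssim\lambda_k^{1/4+\epsilon}=2^{(\sigma+a(1-1/\gamma)\epsilon)k}$, which is the required bound after renaming $\epsilon$.

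For the lower bound when $\gamma>1$, I would test on the Knapp-type functions $\widehat{f_N}=\mathbf{1}_{[N,N+L]}$ with $L:=c\,N^{1-\frac12 a(1-1/\gamma)}$, $c$ a small constant. Writing $\delta_N:=N^{-a/\gamma}$, for $x$ in the interval $I_N:=(-caN^{a-1}\delta_N,\,0)$ choose $t(x):=-x/(aN^{a-1})\in(0,\delta_N)$, which renders the phase $t|\xi|^a+x\xi$ stationary at $\xi=N$; the choice of $L$ makes this phase vary by $O(1)$ over $[N,N+L]$, while $t\le\delta_N$ keeps $t^\gamma|\xi|^a$ bounded there, so the damping is bounded below. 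Hence $|P^{t(x)}_{a,\gamma}f_N(x)|\gtrsim L$, so $P^{*}_{a,\gamma}f_N\gtrsim L$ on $I_N$, and since $|I_N|\sim N^{a(1-1/\gamma)-1}$ this gives $\Vert P^{*}_{a,\gamma}f_N\Vert_{L^2(\mathbb{R})}\gtrsim L|I_N|^{1/2}\sim N^{1/2}$, whereas $\Vert f_N\Vert_{H^s}\sim N^s L^{1/2}\sim N^{s+1/2-\sigma}$; thus the ratio is $\gtrsim N^{\sigma-s}\to\infty$ as $N\to\infty$ whenever $s<\sigma$, forcing $s_a(\gamma)\ge\sigma$ and completing the proof. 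The main obstacle is the upper bound for $\gamma>1$: the time decomposition must be built around the critical scale $\delta_k$ so that the genuinely dispersive region $t\approx\delta_k$ is treated by rescaling to Sj\"olin's global maximal estimate for $S^{*}_a$ — not by the much lossier Sobolev embedding in $t$ — and one must verify that the heat-type factor $\mu_{k,t}$ is absorbed into the Hardy--Littlewood maximal function with an exponentially small gain on the later windows; the lower bound, by contrast, is a routine stationary-phase computation once $L$ and the admissible times are tuned to the damping.
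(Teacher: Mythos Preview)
Your argument is correct, and for the lower bound and the $\gamma\le 1$ case it is essentially the paper's: the paper also reduces $\gamma\le 1$ to the Hardy--Littlewood maximal function (via Lemma~1.4, which factors out a heat kernel and controls its $L^1$ dilates), and the counterexample in Section~3 is exactly your Knapp test, parametrised with $v\sim N^{-1}$ and bump width $v^{(a-1)-a/\gamma}$ in place of your $L$.

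For the upper bound when $\gamma>1$, however, your route is genuinely different. The paper proceeds by the Kolmogorov--Seliverstov--Plessner method, linearising the supremum and dualising to reduce everything to a single oscillatory-integral kernel estimate (Lemma~2.1), which is then proved by Van der Corput's lemma after splitting around the stationary point $\rho=(|x|/ta)^{1/(a-1)}$; the critical exponent $\frac14 a(1-1/\gamma)$ emerges from balancing the Van der Corput loss $t^{-1/(2(a-1))}$ against the damping $e^{-c t^{\gamma-a/(a-1)}|x|^{a/(a-1)}}$. You instead black-box Sj\"olin's global bound $\Vert S^*_a g\Vert_{L^2}\lesssim\Vert g\Vert_{H^{a/4+\epsilon}}$ and make the exponent appear through scaling: a Littlewood--Paley piece at frequency $2^k$ sees the damping only once $t\gtrsim\delta_k=2^{-ka/\gamma}$, so the genuinely dispersive window $t\lesssim\delta_k$ rescales to an $S^*_a$ problem at frequency $\lambda_k^{1/a}=2^{k(1-1/\gamma)}$, giving the loss $\lambda_k^{1/4}=2^{k\sigma}$, while later windows are killed geometrically by the heat factor. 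The paper's approach is self-contained and gives an explicit $L^1$ kernel; yours is softer and conceptually explains why the answer is exactly $\frac{a}{4}\times(1-\frac{1}{\gamma})$, the $S^*_a$ exponent times the fraction of the frequency that survives after rescaling to the damping scale. One small point: your parenthetical ``fundamental theorem of calculus in $t$'' for the low-frequency blocks is loose for a \emph{global} $L^2$ bound; the clean way is the same kernel/maximal-function estimate you already use, since for $|\xi|\lesssim 1$ the multiplier $e^{(it-t^\gamma)|\xi|^a}\widetilde\psi(\xi)$ has inverse Fourier transform uniformly dominated by $(1+|x|)^{-2}$.
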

In the case of $a = 2$, this ``completes'' Theorem \ref{sjolinsoriathm}.

By standard arguments, pointwise almost everywhere convergence of $P^t_{a, \gamma} f$ to $f$ as $t \rightarrow 0$ can be deduced as a corollary of Theorem \ref{mainthm} for functions $f \in H^s(\mathbb{R})$ when $a$ and $\gamma$ are as in the hypotheses of the theorem and $s > s_a(\gamma)$.  In fact, the following stronger result will also be established:
\begin{samepage}\begin{thm} \label{convcorl2}
For each $\gamma \in (0, \infty)$ and $a > 1$, the infimum of the values of $s > 0$ for which
\begin{equation*}
\lim_{t \rightarrow 0} P^t_{a, \gamma} f(x) = f(x)
\end{equation*}
for almost every $x \in \mathbb{R}$ whenever $f \in H^s(\mathbb{R})$ is $\min\(\frac{1}{4} a \(1 - \frac{1}{\gamma}\)^{+}, \frac{1}{4}\)$.  Moreover, this convergence also occurs for all $f \in L^2(\mathbb{R})$ when $\gamma \in (0, 1]$ and for all $f \in H^{\frac{1}{4}}(\mathbb{R})$ when $\gamma \in [\frac{a}{a-1}, \infty)$.
\end{thm}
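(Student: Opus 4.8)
The plan is to deduce Theorem~\ref{convcorl2} from maximal estimates in the standard way, obtaining those estimates from Theorem~\ref{mainthm} together with two further inputs (a complex‑analytic comparison with the undamped operator $S^{*}_{a}$, and a kernel estimate when $\gamma$ is small), and obtaining the sharpness from Stein's maximal principle. First one notes that for $f$ in the Schwartz class the multiplier $e^{(it-t^{\gamma})|\xi|^{a}}$ of $P^{t}_{a,\gamma}$ has modulus at most $1$ and tends to $1$ pointwise as $t\to0$, so by dominated convergence $P^{t}_{a,\gamma}f\to f$ uniformly; thus a.e.\ convergence is trivial on a dense subspace. If one has a maximal inequality $\Vert P^{*}_{a,\gamma}f\Vert_{L^{2}([-1,1])}\lesssim\Vert f\Vert_{H^{s}}$ (or $\lesssim\Vert f\Vert_{L^{2}}$), then splitting an arbitrary $f$ as Schwartz‑plus‑small and applying Chebyshev's inequality yields a.e.\ convergence on $[-1,1]$ for every $f$ in the relevant space, and since $P^{t}_{a,\gamma}$ commutes with translations, applying this to the translates $f(\cdot-n)$, $n\in\mathbb{Z}$, upgrades it to a.e.\ convergence on all of $\mathbb{R}$. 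So the positive half of the theorem reduces to the bounds $\Vert P^{*}_{a,\gamma}f\Vert_{L^{2}([-1,1])}\lesssim\Vert f\Vert_{H^{s}}$ for $s>\min(\tfrac14 a(1-\tfrac1\gamma)^{+},\tfrac14)$, with endpoint $s=\tfrac14$ when $\gamma\geq\tfrac{a}{a-1}$ and endpoint $s=0$ when $\gamma\leq1$, while the sharpness half reduces, via Stein's maximal principle, to the failure of the corresponding weak $(2,2)$ inequalities below the threshold.

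For $\gamma\leq\tfrac{a}{a-1}$, so that $\min(s_{a}(\gamma),\tfrac14)=s_{a}(\gamma)$, Theorem~\ref{mainthm} already gives $\Vert P^{*}_{a,\gamma}f\Vert_{L^{2}(\mathbb{R})}\lesssim\Vert f\Vert_{H^{s}}$ for every $s>s_{a}(\gamma)$; for $\gamma\leq1$, where $s_{a}(\gamma)=0$, the endpoint $\Vert P^{*}_{a,\gamma}f\Vert_{L^{2}(\mathbb{R})}\lesssim\Vert f\Vert_{L^{2}}$ follows from the fact that the complex time $z_{t}=t^{\gamma}-it$ then lies in the fixed cone $|\arg z|\leq\pi/4$ (here $\gamma\leq1$ gives $|\operatorname{Im}z_{t}|=t\leq t^{\gamma}=\operatorname{Re}z_{t}$), on which $\mathcal{F}^{-1}[e^{-z|\xi|^{a}}]$ is dominated by an $L^{1}$‑normalised radially decreasing bump at scale $|z|^{1/a}$, so that $P^{*}_{a,\gamma}f\lesssim Mf$, the Hardy--Littlewood maximal function. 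For $\gamma\geq\tfrac{a}{a-1}$ the required estimate is the endpoint local bound $\Vert P^{*}_{a,\gamma}f\Vert_{L^{2}([-1,1])}\lesssim\Vert f\Vert_{H^{1/4}}$, which I would prove by a maximum‑modulus argument: for fixed $x$ and Schwartz $f$, $w\mapsto S^{w}_{a}f(x)=\int\fhat(\xi)e^{iw|\xi|^{a}}e^{ix\xi}\,d\xi$ is holomorphic on the open upper half‑plane and continuous on its closure, the curve $\{t+it^{\gamma}:t\in(0,1)\}$ lies in the fixed square $Q=(0,1)+i(0,1)$, and subharmonicity of $|S^{w}_{a}f(x)|$ gives $\sup_{t\in(0,1)}|P^{t}_{a,\gamma}f(x)|\leq\max_{w\in\partial Q}|S^{w}_{a}f(x)|$; the bottom edge of $Q$ contributes $S^{*}_{a}f(x)$ (together with $|f(x)|$ and $|S^{1}_{a}f(x)|$) while the left, right and top edges contribute at most $Mf(x)$, $M(S^{1}_{a}f)(x)$ and $\int e^{-|\xi|^{a}}|\fhat(\xi)|\,d\xi$, all of which are bounded in $L^{2}([-1,1])$ by $\Vert S^{*}_{a}f\Vert_{L^{2}([-1,1])}+\Vert f\Vert_{H^{1/4}}$. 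One then invokes the classical local maximal estimate $\Vert S^{*}_{a}f\Vert_{L^{2}([-1,1])}\lesssim\Vert f\Vert_{H^{1/4}}$, valid up to the endpoint for every $a>1$ (Carleson's theorem when $a=2$; in general a routine $TT^{*}$/stationary‑phase computation, subsumed by the local bounds proved elsewhere in this paper).

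For the sharpness, fix $s<\min(s_{a}(\gamma),\tfrac14)$. For each large $N$ I would take the frequency‑$N$ plate $\fhat_{N}=\chi_{[N,N+L_{N}]}$ with $L_{N}=N^{1-2\min(s_{a}(\gamma),1/4)}$, and set $t_{N}=\min(N^{1-a},N^{-a/\gamma})$ — the largest time at which both the damping factor $e^{-t_{N}^{\gamma}N^{a}}$ is $O(1)$ and the associated spatial window is bounded. A stationary‑phase estimate then gives $|P^{t(x)}_{a,\gamma}f_{N}(x)|\sim L_{N}$ for $x$ in a set $E_{N}$ of measure $\sim t_{N}N^{a-1}$, with $t(x)\in(0,t_{N})$ chosen to annihilate the linear term of the phase. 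Since $\Vert f_{N}\Vert_{H^{s}}\approx N^{s+1/2-\min(s_{a}(\gamma),1/4)}$, one checks that $|\{x:P^{*}_{a,\gamma}f_{N}(x)>cL_{N}\}|\geq|E_{N}|$ exceeds $L_{N}^{-2}\Vert f_{N}\Vert_{H^{s}}^{2}$ precisely when $s<\min(s_{a}(\gamma),\tfrac14)$; hence the weak $(2,2)$ maximal inequality from $H^{s}$ into $L^{2,\infty}$ fails, and Stein's maximal principle then forces $\lim_{t\to0}P^{t}_{a,\gamma}f$ to fail to exist on a set of positive measure for some $f\in H^{s}$. If an explicitly divergent example in the style of Dahlberg and Kenig is desired, one superposes $\sum_{j}c_{j}f_{N_{j}}$ with $N_{j}\to\infty$ and with the $f_{N_{j}}$ translated so that the sets $E_{N_{j}}$ sweep out $[0,1]$, choosing $c_{j}$ so that $\sum_{j}c_{j}^{2}\Vert f_{N_{j}}\Vert_{H^{s}}^{2}<\infty$ while a Borel--Cantelli argument yields $\limsup_{t\to0}|P^{t}_{a,\gamma}f|=\infty$ a.e.\ on $[0,1]$.

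I expect the main obstacles to be the two endpoints and the bookkeeping at $\gamma=\tfrac{a}{a-1}$. For $\gamma\geq\tfrac{a}{a-1}$ one needs the endpoint $s=\tfrac14$ (not merely $s>\tfrac14$) of the local $S^{*}_{a}$ estimate for all $a>1$, and one must check carefully that the lower‑order boundary contributions in the maximum‑modulus step really are $\lesssim\Vert f\Vert_{H^{1/4}}$ and not $\Vert f\Vert_{H^{1/2}}$, as the global theory might suggest. On the sharpness side, the work is in verifying that a single family $(N,L_{N},t_{N})$ can be chosen to keep the damping harmless while producing exactly the exponent $\min(s_{a}(\gamma),\tfrac14)$ — that is, that the binding constraint switches cleanly from ``bounded spatial window'' to ``negligible damping'' as $\gamma$ decreases through $\tfrac{a}{a-1}$; the computations on either side are otherwise routine.
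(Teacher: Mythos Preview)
Your outline is correct and the overall architecture matches the paper's: positive results via local maximal bounds and a density argument, endpoints treated separately, and sharpness via the Nikishin--Stein principle applied to the counterexample family. The genuinely new ingredient you introduce is the maximum-modulus/subharmonicity step for the endpoint $s=\tfrac14$ when $\gamma\geq\tfrac{a}{a-1}$, and this is where you and the paper diverge.

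The paper does \emph{not} reduce to the undamped operator $S^{*}_{a}$ at this stage. Instead it goes back into the proof of Lemma~\ref{mainlemma} and observes that, when $|x|$ is small and $\gamma\geq\tfrac{a}{a-1}$, the only place $\alpha$ is used in the $J_{2}$ estimate is the inequality $\alpha\geq\tfrac12$; hence the kernel bound (and so the local $L^{2}$ maximal estimate) already holds at $\alpha=\tfrac12$, i.e.\ at the endpoint $s=\tfrac14$. This keeps the whole argument inside the paper's own oscillatory-integral machinery. Your route is cleaner conceptually---one line of complex analysis in place of re-reading a stationary-phase computation---but it imports Sj\"olin's endpoint bound $\Vert S^{*}_{a}f\Vert_{L^{2}([-1,1])}\lesssim\Vert f\Vert_{H^{1/4}}$ as a black box, whereas the paper's argument is self-contained modulo Lemma~\ref{mainlemma}. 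A minor bonus of your approach is that the subharmonicity bound $P^{*}_{a,\gamma}f\lesssim S^{*}_{a}f+Mf+M(S^{1}_{a}f)+C\Vert f\Vert_{L^{2}}$ holds for \emph{all} $\gamma>0$, not just $\gamma\geq\tfrac{a}{a-1}$, so it gives the uniform local cap $s_{a}^{\loc}(\gamma)\leq\tfrac14$ in one stroke.

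For the sharpness when $\gamma>\tfrac{a}{a-1}$ you re-tune the frequency plate (changing $L_{N}$ and $t_{N}$); the paper instead freezes the construction at $\gamma=\tfrac{a}{a-1}$ and observes that the damping term $G_{t,v}(\eta)=t^{\gamma}|\eta/v-1/v^{2}|^{a}$ is monotone decreasing in $\gamma$ for $t\in(0,1)$, so the same $f_{v}$ and the same choice of $t(x)$ witness failure of the weak-type bound for every larger $\gamma$. Both work; the paper's monotonicity trick is shorter, your re-tuning is more explicit. Your treatment of the $\gamma\leq1$ endpoint (cone $|\arg z|\leq\pi/4$, kernel domination, Hardy--Littlewood) is essentially what the paper sketches in its footnote via Lemma~\ref{boundinglemma} and the appendix.
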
\end{samepage}
Theorem \ref{convcorl2} will be proved in Section \ref{furtherremarks} as a consequence of local bounds for $P^{*}_{a, \gamma}$ deduced from Theorem \ref{mainthm} and some other previous work of Sj\"olin.  To prove Theorem \ref{mainthm}, it will suffice to consider the case of $\gamma > 1$, owing to the following generalisation of Sj\"olin's Lemma 1 from \cite{sjolin}:
\begin{lemma} \label{boundinglemma}
Let $g$ and $h$ be continuous functions mapping $[0, 1]$ to $[0, 1]$ such that $g(t) \leq h(t)$ for all $t \in (0, 1)$.  Then for any $a > 1$,
\begin{equation*}
\Vert \sup_{t \in (0, 1)} |S_a^{t + ih(t)}f| \Vert_{L^2(\mathbb{R})} \lesssim \Vert \sup_{t \in (0, 1)} |S_a^{t + ig(t)} f| \Vert_{L^2(\mathbb{R})}
\end{equation*}
for any $f \in \mathcal{S}(\mathbb{R})$.
\end{lemma}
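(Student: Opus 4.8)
The plan is to reduce the estimate with the larger weight $h$ to the estimate with the smaller weight $g$ by exploiting the fact that the extra decay factor $e^{-(h(t)-g(t))|\xi|^a}$ is, frequency by frequency, an average of the phases $e^{is|\xi|^a}$ against a probability-type kernel. Concretely, writing $S_a^{t+ih(t)}f = S_a^{t+ig(t)}\big(\widetilde{f}_t\big)$ where $\widetilde{f}_t$ has Fourier transform $\fhat(\xi)e^{-(h(t)-g(t))|\xi|^a}$ does not immediately help because $\widetilde f_t$ depends on $t$; instead I would follow Sj\"olin's device of realising the multiplier $e^{-r|\xi|^a}$ (for $r = h(t)-g(t) \geq 0$) as a superposition $\int_{\mathbb{R}} e^{is|\xi|^a}\,d\mu_r(s)$ for a family of finite measures $\mu_r$ with uniformly bounded total variation, $\|\mu_r\| \leq C$ independent of $r$. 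For $a=2$ this is the classical subordination identity expressing a Gaussian as an average of imaginary Gaussians (equivalently, the heat semigroup as a superposition of Schr\"odinger evolutions); for general $a>1$ one uses that $e^{-r|\xi|^a}$ for fixed $r$ is the Fourier transform (in a dual variable) of an $L^1$ function of total mass one when $a$ lies in the admissible range, or more robustly one writes $e^{-r|\xi|^a} = \int e^{is|\xi|^a} K_{a}(r,s)\,ds$ where $K_a(r,\cdot)$ is obtained by Fourier inversion in $s$ and has uniformly bounded $L^1(ds)$ norm; the key point, which I would isolate as the technical heart of the argument, is precisely this uniform total-variation bound.

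Granting such a representation, the main computation is: for each fixed $t \in (0,1)$,
\begin{equation*}
S_a^{t+ih(t)}f(x) = \int_{\mathbb{R}} \fhat(\xi) e^{it|\xi|^a} e^{-g(t)|\xi|^a} e^{-(h(t)-g(t))|\xi|^a} e^{ix\xi}\,d\xi = \int_{\mathbb{R}} S_a^{t+s+ig(t)}f(x)\,d\mu_{h(t)-g(t)}(s).
\end{equation*}
Taking absolute values and using $\|\mu_{h(t)-g(t)}\| \leq C$ gives the pointwise bound $|S_a^{t+ih(t)}f(x)| \leq C\sup_{s}|S_a^{t+s+ig(t)}f(x)|$. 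Taking the supremum over $t\in(0,1)$ on the left and then the $L^2_x$ norm, one obtains $\|\sup_{t\in(0,1)}|S_a^{t+ih(t)}f|\|_{L^2} \lesssim \|\sup_{(t,s)}|S_a^{t+s+ig(t)}f|\|_{L^2}$, where the supremum on the right is over the range of times $t+s$ actually hit. A nuisance is that $t+s$ need not lie in $(0,1)$: the measures $\mu_r$ are not compactly supported, so $s$ ranges over all of $\mathbb{R}$. To handle this I would either (a) use a rescaling/parabolic dilation argument in the spirit of Sj\"olin to absorb time shifts, reducing the supremum over $t+s \in \mathbb{R}$ back to $t \in (0,1)$ at the cost of harmless constants, or (b) split $\mu_r$ into the part supported near $0$ and a tail with exponentially small total variation (using decay of $K_a(r,\cdot)$), handling the tail by crude estimates and the main part as above. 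Option (b) is cleaner since the tail contributes a bounded operator directly.

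The step I expect to be the main obstacle is establishing the uniform-in-$r$ bound on $\|\mu_r\|$ together with enough decay in $s$ to control the tail, since for $a \neq 2$ the kernel $K_a(r,s)$ is not explicit and its $L^1$ behaviour as $r \to 0$ and as $|s|\to\infty$ requires a stationary-phase / oscillatory-integral analysis of $\int e^{is\lambda - r\lambda^a}\,d\lambda$ (for $\lambda = |\xi|^a$, or after the change of variables $\lambda = \xi$, $\int e^{is\xi^a/\,\cdot}$-type integrals on the half-line). One must show these integrals decay in $|s|$ fast enough to be summable while their $L^1$ mass stays bounded uniformly as $r\downarrow 0$; this is where the hypothesis $a>1$ enters decisively. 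The remaining ingredients — linearising the supremum via a measurable selection of $t(x)$, Minkowski's integral inequality to pull the $L^2_x$ norm inside the $s$-integral, and the reduction of time shifts — are routine once the kernel bound is in hand. Finally, the passage from $\mathcal{S}(\mathbb{R})$ to the stated generality is immediate since both sides are finite for Schwartz $f$.
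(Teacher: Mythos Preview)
There is a genuine gap in your approach, and it is not the one you flag. Your subordination identity
\begin{equation*}
S_a^{t+ih(t)}f(x) = \int_{\mathbb{R}} S_a^{(t+s)+ig(t)}f(x)\,d\mu_{h(t)-g(t)}(s)
\end{equation*}
produces operators with \emph{decoupled} real and imaginary parts of the time: the real part is $t+s$ while the damping remains $g(t)$, not $g(t+s)$. The pointwise bound you reach is therefore by $\sup_{t,s}|S_a^{(t+s)+ig(t)}f(x)|$, a two-parameter supremum that is \emph{not} controlled by $\sup_{t'\in(0,1)}|S_a^{t'+ig(t')}f(x)|$ for a general continuous $g$. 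Both of your proposed fixes (parabolic rescaling for the range of $t+s$, or splitting $\mu_r$ into a core and a small tail) address only the size of $t+s$, not this mismatch between the real part $t+s$ and the damping $g(t)$; neither brings you back to the coupled family appearing on the right-hand side of the lemma.

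The paper sidesteps this by treating $e^{-(h(t)-g(t))|\xi|^a}$ as a Fourier multiplier in the \emph{spatial} variable: with $\widehat{K}(\xi)=e^{-|\xi|^a}$ and $K_u=u^{-1}K(u^{-1}\cdot)$ one has $S_a^{t+ih(t)}f=(S_a^{t+ig(t)}f)*K_{(h(t)-g(t))^{1/a}}$, a convolution in $x$. Crucially, the inner operator is exactly $S_a^{t+ig(t)}f$ with the same $t$ in both slots, so after bounding $|K(x)|\lesssim(1+|x|)^{-2}$ by two integrations by parts, the whole expression is dominated pointwise by $M\bigl(\sup_{t'\in(0,1)}|S_a^{t'+ig(t')}f|\bigr)(x)$ and the $L^2$ boundedness of the Hardy--Littlewood maximal operator finishes the job. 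Incidentally, the obstacle you anticipate is also misdirected: after the substitution $\lambda=|\xi|^a\geq 0$, the representation $e^{-r\lambda}=\int_{\mathbb{R}} e^{is\lambda}\,d\mu_r(s)$ is simply the Poisson kernel $d\mu_r(s)=\tfrac{1}{\pi}\tfrac{r}{r^2+s^2}\,ds$, with total mass one independently of both $r$ and $a$; no stationary-phase analysis is needed, and $a>1$ plays no role there (it enters in the paper's argument only when bounding the spatial kernel $K$).
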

In addition to reducing the proof of Theorem \ref{mainthm} to the case of $\gamma > 1$, this lemma also suggests that in terms of understanding the convergence at the origin, the $P^{*}_{a, \gamma}$ are natural operators to consider as they encapsulate the convergence properties of any operator of the form $S_a^{t + ih(t)}f$ when $h(t)$ is of polynomial type near $t = 0$.  The proof is essentially the same as the proof of the analogous result from \cite{sjolin} and will thus be given in an appendix.

The proof of Theorem \ref{mainthm} will be divided into two sections.  In Section \ref{possec}, it will be shown that $\Vert P^{*}_{a, \gamma} f \Vert_{L^2(\mathbb{R})} \lesssim \Vert f \Vert_{H^s(\mathbb{R})}$ holds for all $s$ above the critical index, $\frac{1}{4} a \(1 - \frac{1}{\gamma}\)$, when $\gamma > 1$, whilst in Section \ref{negsec}, it will be shown that this boundedness fails for all $s$ below this index.  Section \ref{furtherremarks} will contain some further remarks on the implications of Theorem \ref{mainthm} and its proof, as well as a proof of Theorem \ref{convcorl2}.

The work contained in this paper will form part of the author's forthcoming doctoral thesis, \cite{phd}.  The author would like to express his gratitude to his supervisor, Jonathan Bennett, for all his support and assistance over the last few years.  The author is also grateful to Neal Bez and Keith Rogers for some valuable discussions about this work.

\section{Proof of Boundedness of the Maximal Operator for Regularity Above the Critical Index} \label{possec}
It is claimed that to show that $\Vert P^{*}_{a, \gamma} f \Vert_{L^2(\mathbb{R})} \lesssim \Vert f \Vert_{H^s(\mathbb{R})}$ for all $s > \frac{1}{4} a \(1 - \frac{1}{\gamma}\)$, it will be sufficient to prove the following lemma:
\begin{lemma} \label{mainlemma}
Suppose that $a$, $\gamma > 1$ and $\alpha > \frac{1}{2} a\(1 - \frac{1}{\gamma}\)$.  If $\gamma < \frac{a}{a-1}$, suppose further that $\alpha < \frac{1}{2}$.  Let $\| \mu \in \mathcal{S}(\mathbb{R})$ be compactly supported, positive, even and real-valued.  Then there exists $K \in L^1(\mathbb{R})$ such that for any $t_1$, $t_2 \in (0, 1)$ and $N \in \mathbb{N}$,
\begin{equation*}
\bigg|\int_{\mathbb{R}} e^{i((t_1 - t_2)|\xi|^a - x\xi)} (1 + \xi^2)^{-\frac{\alpha}{2}} e^{-(t_1^{\gamma} + t_2^{\gamma})|\xi|^a} \mu\(\frac{\xi}{N}\) \, d\xi \bigg| \leq K(x)
\end{equation*}
for all $x \in \mathbb{R}$.
\end{lemma}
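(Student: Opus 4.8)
The plan is to produce $K$ as a constant times the pointwise supremum, over all $t_1,t_2\in(0,1)$ and $N\in\mathbb N$, of a family of explicit bounds for the integral $I_N(x)$ (whose phase I write $\phi(\xi):=(t_1-t_2)|\xi|^a-x\xi$), and then to verify that this supremum lies in $L^1(\mathbb R)$ — it will not be bounded near $x=0$, only having an integrable singularity there, which is all that is required. Two harmless reductions come first: since $\overline{I_N(x)}$ equals the same integral with $t_1,t_2$ interchanged (after $\xi\mapsto-\xi$, the rest of the amplitude being even) and $I_N$ is even in $x$, one may assume $b:=t_1-t_2\ge0$ and $x\ge0$. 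Set $\lambda:=t_1^\gamma+t_2^\gamma$; the two structural facts used throughout are $0<\lambda<2$ and $\lambda\ge b^\gamma$ (from $\max(t_1,t_2)^\gamma\le\lambda$ and $0\le b\le\max(t_1,t_2)$).

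Split the $\xi$-integral by a smooth partition of unity into a low piece on $|\xi|\le2$ and a high piece on $|\xi|\ge1$. On $|\xi|\le2$ the amplitude and its first two derivatives are bounded uniformly in $\lambda,N$ except for the locally integrable singularity $|\xi|^{a-2}$ at $\xi=0$ produced by differentiating $e^{-\lambda|\xi|^a}$, and the amplitude has vanishing first derivative at $\xi=0$ (because $\mu$ is even, so $\mu'(0)=0$, and the remaining terms carry a factor vanishing at $0$). Hence this piece is $O(1)$; and for $x$ larger than a constant depending only on $a$ one has $|\phi'|\gtrsim x$ there, so integrating by parts twice on each half-line $\pm(0,\infty)$ — the $\xi=0^\pm$ boundary terms cancelling, those of the second step vanishing by $g'(0)=0$, the $|\xi|^{a-2}$-type terms being integrable — yields $O(x^{-2})$. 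Thus the low piece is $\lesssim\langle x\rangle^{-2}\in L^1$.

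The high piece, where $(1+\xi^2)^{-\alpha/2}\sim|\xi|^{-\alpha}$, is the heart of the proof. Decompose it dyadically, $\xi\sim2^j$ with $1\lesssim2^j\lesssim N$. On each block $\phi''=a(a-1)b|\xi|^{a-2}$ has size $\sim b2^{j(a-2)}$ and never vanishes, whereas $\phi'$ vanishes only near the scale $\xi_0$ with $|\xi_0|^{a-1}=x/(ab)$, where $|\phi'|\gtrsim\max(x,b|\xi|^{a-1})$ away from it; crucially $\xi_0\ge1$ — i.e. the stationary scale is present in the range of summation — only when $b\le x/a$. On blocks where $|\phi'|$ is large and integration by parts is productive (those well separated from $\xi_0$ with $b2^{ja}\gtrsim1$) I integrate by parts twice, using the self-improving amplitude bounds $|\partial^kA|\lesssim2^{-jk}\,2^{-j\alpha}e^{-c\lambda2^{ja}}$; on the remaining blocks — those near $\xi_0$ and those with $|\phi'|$ large but $b2^{ja}\lesssim1$ — I instead apply van der Corput's second-derivative estimate, which on a block gives a contribution $\lesssim(b2^{j(a-2)})^{-1/2}\,2^{-j\alpha}e^{-c\lambda2^{ja}}$. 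The block near $\xi_0$ contributes, after substituting $|\xi_0|^{a-1}=x/(ab)$, a term $\lesssim b^{(\alpha-1/2)/(a-1)}\,x^{-\theta}\,e^{-\lambda|\xi_0|^a}$ with $\theta:=\tfrac{(a-2)/2+\alpha}{a-1}$. (When $b=0$ there is no stationary scale and integration by parts alone suffices.)

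It remains to sum the dyadic bounds and then take the supremum over the admissible parameters — $b\in(0,1)$ with $b\le x/a$ when the stationary scale is present, $\lambda\in[b^\gamma,2)$, $N\in\mathbb N$ — and see the result is in $L^1(dx)$; this is where the hypotheses enter and the two cases split. If $\gamma\ge\frac a{a-1}$, then $\alpha>\tfrac12a(1-1/\gamma)$ already forces $\alpha>\tfrac12$, so the factors $b^{(\alpha-1/2)/(a-1)}$ (and the analogous positive powers of $b$ in the summed non-stationary contributions) are $\le1$ and discarded; the surviving $x$-powers are then made integrable near infinity by using $e^{-\lambda|\xi_0|^a}\le e^{-c\,x^{a/(a-1)}b^{\gamma-a/(a-1)}}$ (which has a nonnegative power of $b$, hence contributes a Gaussian-type or genuine negative-power tail after optimising in $b$) together with $\alpha>\tfrac12a(1-1/\gamma)$ invoked once, while near $x=0$ the constraint $b\le x/a$ forces the whole high piece to be $\lesssim x^{-1/2}\in L^1_{\mathrm{loc}}$. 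If $\gamma<\frac a{a-1}$, one keeps the extra hypothesis $\alpha<\tfrac12$ and exploits $\lambda\ge b^\gamma$ in the opposite direction: now $e^{-\lambda|\xi_0|^a}\le e^{-c\,x^{a/(a-1)}b^{\gamma-a/(a-1)}}$ carries a negative power of $b$, and this heat-type damping (together with the $\mu$-truncation controlling the blocks on which the Gaussian has not yet engaged) overwhelms the now-negative powers of $b$; optimising in $b$ and in $N$ and invoking $\alpha>\tfrac12a(1-1/\gamma)$ at the decisive point again produces an $L^1$ bound with an integrable power singularity at $x=0$. Taking $K$ to be a constant times the supremum of all these pieces completes the proof. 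The main obstacle is exactly the degeneration of stationary phase as $t_1-t_2\to0$: the van der Corput cost $(\inf|\phi''|)^{-1/2}\sim b^{-1/2}$ blows up, and it can be absorbed only because the stationary scale is confined to $b\le x/a$ and because — depending on the case — either the Sobolev weight contributes the missing power of $b$ ($\alpha>\tfrac12$ when $\gamma\ge\frac a{a-1}$) or the damping $e^{-\lambda|\xi_0|^a}$, through $\lambda\ge b^\gamma$ and $\gamma<\frac a{a-1}$, does; matching these at the common threshold $\alpha=\tfrac12a(1-1/\gamma)$ is the crux.
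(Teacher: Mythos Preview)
Your overall strategy coincides with the paper's: reduce to $b:=t_1-t_2\ge 0$, exploit $\lambda:=t_1^\gamma+t_2^\gamma\ge b^\gamma$, separate a low-frequency piece handled by trivial bounds plus integration by parts, and near the stationary scale $\xi_0=(x/(ab))^{1/(a-1)}$ apply the second-derivative van der Corput estimate while using integration by parts away from it. Your pivotal stationary-block bound
\[
\lesssim b^{\frac{\alpha-1/2}{a-1}}\,x^{-\theta}\,e^{-c\lambda|\xi_0|^a},\qquad \theta=\frac{(a-2)/2+\alpha}{a-1},
\]
is exactly the paper's $J_2$ estimate. The dyadic decomposition in place of the paper's direct three-interval split around $\rho$ is cosmetic.

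There is, however, a genuine gap in your treatment of the case $\gamma>\frac{a}{a-1}$. You say that since $\alpha>\tfrac12$ the factor $b^{(\alpha-1/2)/(a-1)}$ is $\le 1$ and is ``discarded'', and that integrability at infinity then comes from $e^{-c\,x^{a/(a-1)}b^{\gamma-a/(a-1)}}$ ``after optimising in $b$''. But with the $b$-power discarded and $\gamma-\tfrac{a}{a-1}>0$, the supremum over $b\in(0,1)$ of that exponential is $1$ (attained as $b\to 0$), leaving only $x^{-\theta}$; and $\theta\le 1$ precisely when $\alpha\le a/2$, which is the regime of interest. So the bound you describe is \emph{not} in $L^1$ near infinity. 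The factor $b^{(\alpha-1/2)/(a-1)}$ cannot be thrown away: it is what compensates the degeneration of the damping as $b\to 0$. The paper keeps it and converts the exponential via $e^{-y}\lesssim_\beta y^{-\beta}$ with
\[
\beta=\frac{\alpha-\tfrac12}{(a-1)\gamma-a},
\]
which eliminates the $b$-dependence and yields $|x|^{-k}$ with $k>1$ exactly because $\alpha>\tfrac12 a(1-1/\gamma)$. (Equivalently: maximise $b^{p}e^{-Ab^{q}}$ over $b\in(0,1)$ with $p,q>0$ to get $\sim A^{-p/q}$, which gives the same $|x|^{-k}$.) Your own near-$0$ claim ``$\lesssim x^{-1/2}$ via $b\le x/a$'' in fact \emph{uses} this $b$-power, so the write-up is also internally inconsistent. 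Once you retain the factor and balance it against the exponential as above, your argument goes through and matches the paper's.
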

It is remarked that the assumption that $\alpha < \frac{1}{2}$ for $\gamma < \frac{a}{a-1}$ is purely for technical reasons and since only minimal choices of $\alpha$ are of interest in proving Theorem \ref{mainthm}, it will have no impact on the usefulness of this lemma.

The sufficiency of this lemma in establishing the desired boundedness can be shown using the Kolmogorov--Seliverstov--Plessner method.  Indeed, assuming this lemma to be true, fix any positive, even $\eta \in \mathcal{S}(\mathbb{R})$ supported in $[-1, 1]$ and equal to $1$ in $[-\frac{1}{2}, \frac{1}{2}]$.  Also, fix a measurable function $t: \mathbb{R} \rightarrow (0, 1)$ and define for each $N \in \mathbb{N}$,
\begin{equation*}
P_{a, \gamma, N}^{t(x)} f(x) \coloneq \eta\(\frac{x}{N}\) \int_{\mathbb{R}} \fhat(\xi) e^{it(x)|\xi|^a} e^{-t^{\gamma}(x)|\xi|^a} e^{ix\xi} \eta\Big(\frac{\xi}{N}\Big)\, d\xi.
\end{equation*} 
To establish that $\Vert P^{*}_{a, \gamma} f \Vert_{L^2(\mathbb{R})} \lesssim \Vert f \Vert_{H^s(\mathbb{R})}$ for all $s > \frac{1}{4} a \(1 - \frac{1}{\gamma}\)$, it suffices to prove that
\begin{equation*}
\Vert P^{t(\cdot)}_{a, \gamma, N} f \Vert_{L^2(\mathbb{R})} \lesssim \Vert f \Vert_{H^s(\mathbb{R})}
\end{equation*}
for any $N \in \mathbb{N}$ with constant independent of $N$ and $t$.  This is equivalent to showing that for any $g \in L^2(\mathbb{R})$ with $\Vert g \Vert_{L^2(\mathbb{R})} = 1$,
\begin{equation*}
\Big| \int_{\mathbb{R}} P^{t(x)}_{a, \gamma, N} f(x) \bar{g(x)} \, dx \Big| \lesssim \Vert f \Vert_{H^s(\mathbb{R})}.
\end{equation*}
However, by Fubini's Theorem and the Cauchy--Schwarz inequality,
\begin{eqnarray*}
&& \Big| \int_{\mathbb{R}} P^{t(x)}_{a, \gamma, N}f(x) \bar{g(x)} \, dx \Big|\\*
&=& \Big| \int_{\mathbb{R}} \fhat(\xi) (1 + \xi^2)^{\frac{s}{2}} (1 + \xi^2)^{-\frac{s}{2}} \eta\Big(\frac{\xi}{N}\Big) \int_{\mathbb{R}} e^{it(x)|\xi|^a} e^{-t^{\gamma}(x) |\xi|^a} e^{ix \xi} \bar{g(x)} \eta\(\frac{x}{N}\) \, dx \, d\xi \Big|\\*
&\leq& \Vert f \Vert_{H^s(\mathbb{R})} \Big| \int_{\mathbb{R}} (1 + \xi^2)^{-s} \eta^2\Big(\frac{\xi}{N}\Big) \int_{\mathbb{R}} \int_{\mathbb{R}} e^{i(t(x) - t(y))|\xi|^a} e^{-(t^{\gamma}(x)+t^{\gamma}(y))|\xi|^a} e^{i(x - y)\xi} \bar{g(x)} g(y)\\*
&& \quad {} \times \eta\(\frac{x}{N}\) \eta\(\frac{y}{N}\) \, dx \, dy \, d\xi \Big|^{\frac{1}{2}}\\
&\lesssim& \Vert f \Vert_{H^s(\mathbb{R})} \Big( \int_{\mathbb{R}} \int_{\mathbb{R}} |g(x)| |g(y)| \Big| \int_{\mathbb{R}} e^{i((t(x) - t(y))|\xi|^a - (y-x)\xi)} (1 + \xi^2)^{-s} e^{-(t^{\gamma}(x) + t^{\gamma}(y))|\xi|^a}\\*
&& \quad {} \times \eta^2\Big(\frac{\xi}{N}\Big) \, d\xi \Big| \, dx \, dy \Big)^{\frac{1}{2}}.
\end{eqnarray*}
By Lemma \ref{mainlemma} (where $\alpha = 2s$, $\mu = \eta^2$, $t_1 = t(x)$ and $t_2 = t(y)$) and a further application of the Cauchy--Schwarz inequality, this quantity can be bounded by $\| \Vert f \Vert_{H^s(\mathbb{R})} \Vert |K| * |g| \Vert_{L^2(\mathbb{R})}^{\frac{1}{2}} \Vert g \Vert_{L^2(\mathbb{R})}^{\frac{1}{2}}$, which, by Young's convolution inequality and the fact that $\Vert g \Vert_{L^2(\mathbb{R})} = 1$, is bounded by $\Vert f \Vert_{H^s(\mathbb{R})} \Vert K \Vert_{L^1(\mathbb{R})}^{\frac{1}{2}}$.  This establishes the desired boundedness of $P^{*}_{a, \gamma}$.

Lemma \ref{mainlemma} is based on Lemmata 2.1, 2.2, 2.3 and 2.4 from \cite{sjolinsoria} and its proof given here follows a similar strategy to the proofs of those lemmata.  Note that of these four lemmata, Lemma 2.1 was proved in \cite{sjolin2007} (where it is cited as having been originally proved implicitly in \cite{gulkan} using a method from \cite{sjolinarticle}) and Lemma 2.2 was proved in \cite{sjolin}.

The remainder of this section will be devoted to the proof of Lemma \ref{mainlemma}.

To begin with, for each $\epsilon > 0$, define the function $h_{\epsilon}(\xi) \coloneq e^{-\epsilon |\xi|^a}$.  It is claimed that for $\xi \neq 0$,
\begin{equation*}
|h_{\epsilon}'(\xi)| \lesssim \frac{1}{|\xi|} \quad \mbox{and} \quad |h_{\epsilon}''(\xi)| \lesssim \frac{1}{|\xi|^2}
\end{equation*}
with constant independent of $\epsilon$.  Indeed, note that $h_{\epsilon}'(\xi) = -\sgn(\xi) \epsilon a |\xi|^{a-1} e^{-\epsilon |\xi|^a}$, so \begin{equation*}
|h_{\epsilon}'(\xi)| \leq \frac{a}{|\xi|} ( \max_{y \in \mathbb{R}^{+}} y e^{-y} ) \lesssim \frac{1}{|\xi|}.
\end{equation*}
Similarly,
\begin{equation*}
|h_{\epsilon}''(\xi)| \lesssim \frac{1}{|\xi|^2} (\max_{y \in \mathbb{R}^{+}} y e^{-y} ) + \frac{1}{|\xi|^2} (\max_{y \in \mathbb{R}^{+}} y^2 e^{-y}) \lesssim \frac{1}{|\xi|^2}.
\end{equation*}

Now, assume without loss of generality that $t_2 < t_1$ and set $t \coloneq t_1 - t_2$ and $\epsilon \coloneq t_1^{\gamma} + t_2^{\gamma}$.  Also, define $F(\xi) \coloneq t|\xi|^a - x\xi$ and $G(\xi) \coloneq (1 + \xi^2)^{-\frac{\alpha}{2}} e^{-\epsilon|\xi|^a} \mu\(\frac{\xi}{N}\)$, so that the integral in Lemma \ref{mainlemma} can be rewritten as
\begin{equation*}
\int_{\mathbb{R}} e^{iF(\xi)} G(\xi) \, d\xi.
\end{equation*}
The letter $\rho$ will be used to denote $\(\frac{|x|}{ta}\)^{\frac{1}{a-1}}$, a (possibly) stationary point of $F$.

Fixing a large constant $C_0 \in \mathbb{R}^{+}$, the cases of $|x| \leq C_0$ and $|x| \geq C_0$ will be considered separately.

\subsection{\texorpdfstring{The Case of $|x| \leq C_0$}{The Case of Small x}}
Split the integral as $A + B$ where
\begin{eqnarray*}
A &\coloneq& \int_{|\xi| \leq |x|^{-1}} e^{i(t|\xi|^a - x \xi)} (1+\xi^2)^{-\frac{\alpha}{2}} e^{-\epsilon |\xi|^a} \mu\(\frac{\xi}{N}\) \, d\xi,\\*
B &\coloneq& \int_{|\xi| \geq |x|^{-1}} e^{i(t|\xi|^a - x \xi)} (1+\xi^2)^{-\frac{\alpha}{2}} e^{-\epsilon |\xi|^a} \mu\(\frac{\xi}{N}\) \, d\xi.
\end{eqnarray*}

The first integral, $A$, can be bounded trivially by simply observing that
\begin{equation*}
|A| \lesssim \int_{|\xi| \leq |x|^{-1}} (1+ \xi^2)^{-\frac{\alpha}{2}} \, d\xi \lesssim 1 + |x|^{\alpha - 1} \lesssim |x|^{\min(0, \alpha - 1)}.
\end{equation*}

Since $\min(0, \alpha - 1) > -1$, the required estimate on $A$ is established.

To estimate $B$, first assume that $|x|^a \leq \frac{t}{2}$.  This ensures that the phase of the integrand (the function $F$) is never stationary in the region of integration for $B$.

By symmetry, it will suffice to bound $B$ with the range of integration restricted to positive values of $\xi$.  By direct calculation, for such $\xi$, $F'(\xi) = a t \xi^{a-1} - x$, so it can be seen that $F'$ is monotonic.  Further, given that $|\xi| \geq |x|^{-1}$ and $\frac{t}{|x|^{a-1}} \geq 2|x|$,
\begin{equation*}
|F'(\xi)| \geq |x| (2a - 1) > |x|,
\end{equation*}
so by Van der Corput's Lemma, it follows that
\begin{equation*}
\bigg|\int_{\xi > |x|^{-1}} e^{iF(\xi)} G(\xi) \, d\xi \bigg| \lesssim \frac{1}{|x|} \(\sup_{\xi > |x|^{-1}} |G(\xi)| + \int_{\xi > |x|^{-1}} |G'(\xi)| \, d\xi \).
\end{equation*}

Trivially, $|G(\xi)| \lesssim (1 + \xi^2)^{-\frac{\alpha}{2}} \lesssim |x|^{\alpha}$ for $\xi > |x|^{-1}$.  Recalling that $h_\epsilon(\xi) \coloneq e^{-\epsilon|\xi|^a}$,
\begin{equation*}
G'(\xi) = 2\xi\(-\frac{\alpha}{2}\)(1+\xi^2)^{-\frac{\alpha}{2} - 1} h_\epsilon(\xi) \mu\(\frac{\xi}{N}\) + (1+ \xi^2)^{-\frac{\alpha}{2}} h_{\epsilon}'(\xi) \mu\(\frac{\xi}{N}\) + (1 + \xi^2)^{-\frac{\alpha}{2}} h_{\epsilon}(\xi) \frac{1}{N} \mu'\(\frac{\xi}{N}\),
\end{equation*}
so since $|h_{\epsilon}'(\xi)| \lesssim \frac{1}{|\xi|}$ with constant independent of $\epsilon$,
\begin{eqnarray*}
|G'(\xi)| &\leq& \alpha \xi (1 + \xi^2)^{-\frac{\alpha}{2} - 1} h_{\epsilon}(\xi) \mu\(\frac{\xi}{N}\) + (1 + \xi^2)^{-\frac{\alpha}{2}} |h_{\epsilon}'(\xi)| \mu\(\frac{\xi}{N}\) + (1 + \xi^2)^{-\frac{\alpha}{2}} h_{\epsilon}(\xi) \frac{1}{N} \Big|\mu'\(\frac{\xi}{N}\)\Big|\\*
&\lesssim& \xi^{-\alpha - 1} + \frac{\xi^{-\alpha}}{N} \Big|\mu'\(\frac{\xi}{N}\)\Big|\\
&\lesssim& \xi^{-\alpha - 1}.
\end{eqnarray*}
It follows that
\begin{equation*}
\int_{\xi > |x|^{-1}} |G'(\xi)| \, d\xi \leq \int_{|x|^{-1}}^{\infty} \xi^{-\alpha - 1} \, d\xi \lesssim |x|^{\alpha},
\end{equation*}
and hence $B \lesssim |x|^{\alpha - 1}$.  Again, given that $\alpha - 1 > -1$, the desired estimate for $B$ holds in the case of $|x|^a \leq \frac{t}{2}$.

To complete the proof of the lemma in the case of $|x| \leq C_0$, it remains to bound $B$ in the case that $|x|^a \geq \frac{t}{2}$.  As before, it suffices to consider only positive $\xi$.  To proceed, fix a small constant, $\delta$, and a large constant, $K$.  The range of integration will be split into the following regions:
\begin{eqnarray*}
I_1 &\coloneq& \{ \xi \geq |x|^{-1} : \xi \leq \delta \rho \},\\*
I_2 &\coloneq& \{ \xi \geq |x|^{-1} : \xi \in [\delta \rho, K\rho] \},\\*
I_3 &\coloneq& \{ \xi \geq |x|^{-1} : \xi \geq K \rho \},
\end{eqnarray*}
recalling that $\rho = \(\frac{|x|}{ta}\)^{\frac{1}{a-1}}$.  For each $j \in \{1, 2, 3\}$, the integral in $B$ restricted to the region $I_j$ will be denoted by $J_j$.

This splitting isolates a neighbourhood around the point of (possible) stationary phase of the integrand ($I_2$) from the remaining range of integration either side ($I_1$ and $I_3$).  The latter regions will be bounded using a lower bound on $F'$ and an application of Van der Corput's Lemma, as before.  Indeed, for $\xi \in I_1$, it can be seen that $at\xi^{a-1} \leq \delta^{a-1}|x| \leq \frac{|x|}{2}$, hence $|F'(\xi)| = |at\xi^{a-1} - x| \geq \frac{|x|}{2}$.  Similarly, for $\xi \in I_3$, it can be seen that $at\xi^{a-1} \geq K^{a-1}|x| \geq 2|x|$, so $|F'(\xi)| \geq \frac{|x|}{2}$.  From before,
\begin{equation*}
\sup_{\xi > |x|^{-1}} |G(\xi)| + \int_{\xi > |x|^{-1}} |G'(\xi)| \, d\xi \lesssim |x|^{\alpha},
\end{equation*}
so by Van der Corput's Lemma,
\begin{equation*}
|J_1|,\ |J_3| \lesssim |x|^{-1} |x|^{\alpha} = |x|^{\alpha - 1}.
\end{equation*}

Unsurprisingly, the estimate on $J_2$ is more delicate, although it is still attained using Van der Corput's Lemma, this time with the second derivative of $F$ bounded below.  To begin with, assume that $\gamma \geq \frac{a}{a-1}$.  For any $\xi$ in $I_2$, it is the case that $\xi \sim \rho$.  Given that $F''(\xi) = a(a-1)t \xi^{a-2}$, it follows that $|F''(\xi)| \gtrsim t^{\frac{1}{a-1}} |x|^{\frac{a-2}{a-1}}$.  Following the same method as before, but now using that $\xi \gtrsim \rho$ instead of simply that $\xi \geq |x|^{-1}$, it is also the case that
\begin{equation*}
\sup_{\xi \in I_2} |G(\xi)| \lesssim \rho^{-\alpha} \quad \mbox{and} \quad \int_{I_2} |G'(\xi)| \, d\xi \lesssim \rho^{-\alpha}.
\end{equation*}
Consequently, by Van der Corput's Lemma,
\begin{equation*}
|J_2| \lesssim t^{-\frac{1}{2(a-1)}} |x|^{-\frac{(a-2)}{2(a-1)}} \rho^{-\alpha} \approx t^{\frac{1}{a-1}(\alpha - \frac{1}{2})} |x|^{\frac{1}{a-1}(1 - \frac{1}{2}a - \alpha)}.
\end{equation*}
Since $\gamma \geq \frac{a}{a-1}$, it is necessarily the case that $\alpha - \frac{1}{2} > 0$.  Using further the assumption that $|x|^a \geq \frac{t}{2}$, it follows that
\begin{equation*}
t^{\frac{1}{a-1}(\alpha - \frac{1}{2})} \lesssim |x|^{\frac{a}{a-1}(\alpha - \frac{1}{2})},
\end{equation*}
so
\begin{equation*}
|J_2| \lesssim |x|^{\frac{a}{a-1}(\alpha - \frac{1}{2})} |x|^{\frac{1}{a-1} (1 - \frac{1}{2}a - \alpha)} = |x|^{\alpha - 1},
\end{equation*}
which completes the desired estimate.

It remains only to consider the case of $\gamma < \frac{a}{a-1}$.  The lower bound, $|F''(\xi)| \geq t^{\frac{1}{a-1}} |x|^{\frac{a-2}{a-1}}$ will be used again in another application of Van der Corput's Lemma, but instead of using the fact that $|x|^a \geq \frac{t}{2}$, improved estimates on $G$ will be required.  Indeed, note first that
\begin{equation*}
\sup_{\xi \in I_2} |G(\xi)| \lesssim \rho^{-\alpha} e^{-\delta^a \epsilon \rho^a}.
\end{equation*}
Also, similarly to before,
\begin{equation*}
|G'(\xi)| \lesssim \rho^{-\alpha} |h_{\epsilon}'(\xi)| + \rho^{-\alpha - 1}h_{\epsilon}(\delta \rho),
\end{equation*}
so
\begin{eqnarray*}
\int_{I_2} |G'(\xi)| \, d\xi &\lesssim& \rho^{-\alpha} \int_{\delta\rho}^{K\rho} |h_{\epsilon}'(\xi)| \, d\xi + \int_{\delta\rho}^{K\rho} \rho^{-\alpha - 1} h_{\epsilon} (\delta\rho) \, d\xi\\*
&=& -\rho^{-\alpha} \int_{\delta\rho}^{K\rho} h_{\epsilon}'(\xi) \, d\xi + \int_{\delta\rho}^{K\rho} \rho^{-\alpha - 1} h_{\epsilon}(\delta\rho) \, d\xi\\
&\approx& \rho^{-\alpha} e^{-\delta^a \epsilon \rho^a}
\end{eqnarray*}
by the Fundamental Theorem of Calculus.

As such, by Van der Corput's Lemma,
\begin{eqnarray*}
|J_2| &\lesssim& t^{-\frac{1}{2(a-1)}} |x|^{-\frac{a-2}{2(a-1)}} \rho^{-\alpha} e^{-\delta^a \epsilon \rho^a}\\*
&\approx& t^{\frac{1}{a-1}(\alpha - \frac{1}{2})} |x|^{\frac{1}{a-1}(-\alpha - \frac{1}{2}(a-2))} e^{-\delta^a(t_1^{\gamma} + t_2^{\gamma})|x|^{\frac{a}{a-1}} t^{-\frac{a}{a-1}}}.
\end{eqnarray*}
Further, observing that $t_1^{\gamma} + t_2^{\gamma} \gtrsim (t_1 + t_2)^{\gamma} \geq t^{\gamma}$, it can be seen that there exists a small constant $c_0 > 0$ such that
\begin{equation*}
|J_2| \lesssim t^{\frac{1}{a-1}(\alpha - \frac{1}{2})} |x|^{\frac{1}{a-1}(-\alpha - \frac{1}{2}(a-2))} e^{-\delta^a c_0 t^{\gamma - \frac{a}{a-1}}|x|^{\frac{a}{a-1}}}.
\end{equation*}
Noting that for any $y$, $\beta > 0$, the inequality
\begin{equation*}
e^{-y} \lesssim_{\beta} y^{-\beta}
\end{equation*}
holds, it follows that for any $\beta > 0$,
\begin{eqnarray*}
|J_2| &\lesssim& t^{\frac{1}{a-1} (\alpha - \frac{1}{2})} |x|^{\frac{1}{a-1} (-\alpha - \frac{1}{2}(a-2))} t^{-\beta(\gamma - \frac{a}{a-1})} |x|^{-\frac{\beta a}{a-1}}\\*
&=& \frac{t^{\frac{1}{a-1}(\alpha - \frac{1}{2})}}{t^{\beta(\gamma - \frac{a}{a-1})}} \frac{1}{|x|^{\frac{1}{a-1}(\alpha + \frac{1}{2}(a-2) + \beta a)}}.
\end{eqnarray*}
Choose $\beta$ such that $\frac{1}{a-1}(\alpha - \frac{1}{2}) = \beta(\gamma - \frac{a}{a-1})$, that is $\beta = \frac{\alpha - \frac{1}{2}}{(a-1)\gamma - a}$, noting that $\beta$ is genuinely positive as $\gamma < \frac{a}{a-1}$ and $\alpha < \frac{1}{2}$.  It follows that $|J_2| \lesssim \frac{1}{|x|^k}$ where $k = \frac{1}{a-1} \(\alpha + \frac{1}{2}(a-2) + \frac{a(\alpha - \frac{1}{2})}{(a-1)\gamma - a}\)$, so it remains only to show that $k < 1$.  However,
\begin{equation*}
k = \frac{1}{a-1}\(\alpha \(\frac{(a-1)\gamma}{(a-1)\gamma - a}\) + \frac{1}{2} (a-2) - \frac{\frac{1}{2}a}{(a-1)\gamma - a}\),
\end{equation*}
but $\gamma < \frac{a}{a-1}$, so $\frac{(a-1)\gamma}{(a-1)\gamma - a} < 0$, hence the fact that $\alpha > \frac{1}{2} a (1 - \frac{1}{\gamma})$ implies that
\begin{equation*}
k < \frac{1}{a-1} \(\frac{1}{2}a \(1 - \frac{1}{\gamma}\)\(\frac{(a-1)\gamma}{(a-1)\gamma - a}\) + \frac{1}{2}(a-2) - \frac{\frac{1}{2}a}{(a-1)\gamma - a}\) = 1.
\end{equation*}
The estimate for $|x| < C_0$ is thus established.

\subsection{\texorpdfstring{The Case of $|x| \geq C_0$}{The Case of Large x}}
Here again the integral will be split into four regions, this time smoothly partitioned.  To this end, define $\phi_0 \in \mathcal{S}(\mathbb{R})$ to be supported in $[-1, 1]$ and equal to $1$ in $[-\frac{1}{2}, \frac{1}{2}]$ and $\phi_2 \in \mathcal{S}(\mathbb{R})$ to be supported in $[\delta\rho, K\rho]$ and equal to $1$ in $[2\delta\rho, \frac{1}{2}K\rho]$, where, as before, $\delta$ is a small constant, $K$ is a large constant and $\rho = \(\frac{|x|}{ta}\)^{\frac{1}{a-1}}$.  For the sake of simplicity, it is assumed that $C_0$ and $\delta$ have been chosen so that $\delta \(\frac{|x|}{a}\)^{\frac{1}{a-1}} > 1$ (and hence $\delta \rho > 1$).  Define $\phi_3 \coloneq (1 - \phi_2) \chi_{[\frac{1}{2} K \rho, \infty)}$ and $\phi_1 \coloneq (1 - \phi_2 - \phi_0) \chi_{[\frac{1}{2}, 2\delta\rho]}$.  Further, define $G_j \coloneq G \phi_j$ and let $I_j$ represent the support of $G_j$ for each $j \in \{0, 1, 2, 3\}$, so that
\begin{eqnarray*}
I_0 &=& [-1, 1],\\*
I_1 &=& [\tfrac{1}{2}, 2\delta\rho],\\*
I_2 &=& [\delta\rho, K\rho],\\*
I_3 &=& [\tfrac{1}{2}K\rho, \infty).
\end{eqnarray*}
As before, this splitting isolates a region, $I_2$, around a point of possible stationary phase of the integrand from regions either side, $I_1$ and $I_3$.  The region $I_0$ has a similar role to integral $A$ from the previous section.

By symmetry it suffices to estimate
\begin{equation*}
J_j \coloneq \int e^{iF} G_j
\end{equation*}
for each $j \in \{0, 1, 2, 3\}$.

In the case of $J_0$, writing $e^{iF}G$ as $(e^{-ix\xi})(e^{it|\xi|^a}G_0(\xi))$ and integrating by parts twice yields that
\begin{equation*}
|J_0| \leq \frac{1}{x^2} \int_{-1}^1 \Big| \frac{d^2}{d\xi^2} (e^{it|\xi|^a} G_0(\xi))\Big| \, d\xi.
\end{equation*}
By direct calculation and the triangle inequality,
\begin{equation*}
\Big|\frac{d^2}{d\xi^2} (e^{it|\xi|^a} G_0(\xi))\Big| \lesssim |\xi|^{a-2} |G_0(\xi)| + |\xi|^{2a - 2}|G_0(\xi)| + |\xi|^{a-1}|G_0'(\xi)| + |G_0''(\xi)|.
\end{equation*}
Now, $G_0(\xi) = (1+\xi^2)^{-\frac{\alpha}{2}} e^{-\epsilon|\xi|^a}\mu\(\frac{\xi}{N}\) \phi_0(\xi)$, so for $\xi \in [-1, 1]$, it is clear that $|G_0(\xi)| \lesssim 1$.  Further, the first derivatives of all terms in the product defining $G_0$ are bounded for $\xi \in [-1, 1]$, so $|G_0'(\xi)| \lesssim 1$ also.  Finally, the second derivatives of all terms in the product defining $G_0$ are also bounded for $\xi \in [-1, 1]$ with the exception of $\frac{d^2}{d\xi^2} e^{-\epsilon |\xi|^a}$.  However, by the triangle inequality,
\begin{equation*}
\Big|\frac{d^2}{d\xi^2} e^{-\epsilon|\xi|^a}\Big| \leq \epsilon a(a-1) |\xi|^{a-2} e^{-\epsilon |\xi|^a} + (a \epsilon |\xi|^{a-1})^2 e^{-\epsilon |\xi|^a} \lesssim |\xi|^{a-2} + 1.
\end{equation*}
Given that $a > 1$, this expression is integrable on $[-1, 1]$, and it hence follows that
\begin{equation*}
\int_{-1}^1 \Big|\frac{d^2}{d\xi^2} (e^{it|\xi|^a} G_0(\xi))\Big| \, d\xi \lesssim 1,
\end{equation*}
so $|J_0| \lesssim x^{-2}$, completing the required estimate on $J_0$.

For $j \in \{1, 3\}$, integrating by parts twice yields that
\begin{eqnarray*}
|J_j| &=& \Big| \int_{I_j} e^{iF(\xi)} \( -\frac{{G_j}''(\xi)}{(F'(\xi))^2} + \frac{3{G_j}'(\xi)F''(\xi)}{(F'(\xi))^3} + \frac{G_j(\xi)F'''(\xi)}{(F'(\xi))^3} -\frac{3G_j(\xi)(F''(\xi))^2}{(F'(\xi))^4} \) \, d\xi \Big|\\*
&\lesssim& \int_{I_j} \frac{1}{(F'(\xi))^2} \( |{G_j}''(\xi)| + \frac{|F''(\xi)|}{|F'(\xi)|} |{G_j}'(\xi)| + \frac{|F'''(\xi)|}{|F'(\xi)|} |G_j(\xi)| + \frac{|F''(\xi)|^2}{|F'(\xi)|^2} |G_j(\xi)| \) \, d\xi.
\end{eqnarray*}

Given that $\xi > 0$, by direct calculation,
\begin{equation*}
F(\xi) = t\xi^a - x\xi, \quad F'(\xi) = at\xi^{a-1} - x, \quad F''(\xi) = a(a-1)t\xi^{a-2}, \quad F'''(\xi) = a(a-1)(a-2)t\xi^{a-3}.
\end{equation*}
For $\xi \in I_1$, $at\xi^{a-1} \leq at 2^{a-1} \delta^{a-1} \rho^{a-1} = 2^{a-1} \delta^{a-1} |x|$.  It follows that $|F'(\xi)| \gtrsim |x|$ and hence also that $|F'(\xi)| \gtrsim at\xi^{a-1}$.  Similarly, for $\xi \in I_3$, $at\xi^{a-1} \geq at 2^{1-a} K^{a-1} \rho^{a-1} = 2^{1-a} K^{a-1} |x|$, hence $|F'(\xi)| \gtrsim |x|$ and $|F'(\xi)| \gtrsim at\xi^{a-1}$ as well.  It follows in both cases that
\begin{equation*}
\frac{|F''(\xi)|}{|F'(\xi)|} \lesssim \xi^{-1} \quad \mbox{and} \quad \frac{|F'''(\xi)|}{|F'(\xi)|} \lesssim \xi^{-2}.
\end{equation*}
Since $h_{\epsilon}(\xi) \coloneq e^{-\epsilon |\xi|^a}$ satisfies the estimates $|h_{\epsilon}'(\xi)| \lesssim \frac{1}{|\xi|}$ and $|h_{\epsilon}''(\xi)| \lesssim \frac{1}{|\xi|^2}$, with constants independent of $\epsilon$, it is easily seen that for any $j \in \{1, 2, 3\}$,
\begin{equation*}
|G_j(\xi)| \lesssim \frac{1}{|\xi|^{\alpha}}, \quad |{G_j}'(\xi)| \lesssim \frac{1}{|\xi|^{\alpha + 1}}, \quad |{G_j}''(\xi)| \lesssim \frac{1}{|\xi|^{\alpha + 2}}.
\end{equation*}
Consequently, for $j \in \{1, 3\}$,
\begin{equation*}
|J_j| \lesssim \frac{1}{|x|^2} \int_{I_j} \frac{1}{|\xi|^{\alpha + 2}}\, d\xi \lesssim \frac{1}{|x|^2},
\end{equation*}
which completes the required estimates on $J_1$ and $J_3$.

To bound $J_2$, following the same steps as in the bound for $J_2$ when $|x| \leq C_0$, it can be seen that for any $\xi \in I_2$,
\begin{equation*}
|F''(\xi)| \gtrsim t^{\frac{1}{a-1}}|x|^{\frac{a-2}{a-1}}
\end{equation*}
and that
\begin{equation*}
\sup_{\xi \in I_2} |G_2(\xi)| + \int_{I_2} |{G_2}'(\xi)| \, d\xi \lesssim \rho^{-\alpha} e^{-\delta^a \epsilon \rho^a},
\end{equation*}
so by Van der Corput's Lemma,
\begin{equation*}
|J_2| \lesssim t^{\frac{1}{a-1} (\alpha - \frac{1}{2})} |x|^{\frac{1}{a-1}(-\alpha - \frac{1}{2}(a-2))} e^{-\delta^a c_0 t^{\gamma - \frac{a}{a-1}}|x|^{\frac{a}{a-1}}}
\end{equation*}
for some small constant $c_0 > 0$.  If $\gamma = \frac{a}{a-1}$, then noting that $\alpha > \frac{1}{2}$ and that $\alpha+\frac{1}{2}(a-2) > 0$, it follows that $|J_2| \lesssim e^{-\delta^a c_0 |x|^{\frac{a}{a-1}}}$ and the estimate is complete.  Otherwise, proceeding as before, for any $\beta > 0$,
\begin{equation*}
|J_2| \lesssim \frac{t^{\frac{1}{a-1}(\alpha - \frac{1}{2})}}{t^{\beta(\gamma - \frac{a}{a-1})}} \frac{1}{|x|^{\frac{1}{a-1}(\alpha + \frac{1}{2}(a-2) + \beta a)}}.
\end{equation*}
If $\gamma < \frac{a}{a-1}$, rewrite this as
\begin{equation*}
|J_2| \lesssim \frac{t^{\beta(\frac{a}{a-1} - \gamma)}}{t^{\frac{1}{a-1}(\frac{1}{2} - \alpha)}} \frac{1}{|x|^{\frac{1}{a-1}(\alpha + \frac{1}{2}(a-2) + \beta a)}}
\end{equation*}
and note that $\beta$ can be set as large as is desired to conclude a suitable estimate.

If $\gamma > \frac{a}{a-1}$, choose $\beta = \frac{\alpha - \frac{1}{2}}{(a-1)\gamma - a}$ as in the case of $|x| \leq C_0$, noting that it is still the case that this choice of $\beta$ is positive, as $\alpha > \frac{1}{2}$ and $\gamma > \frac{a}{a-1}$.  As before, it can thus be concluded that $|J_2| \lesssim \frac{1}{|x|^k}$ where $k = \frac{1}{a-1} \(\alpha \(\frac{(a-1)\gamma}{(a-1)\gamma - a}\) + \frac{1}{2} (a-2) - \frac{\frac{1}{2}a}{(a-1)\gamma - a} \)$ and it remains to show that $k > 1$ in this case.  However, since $\gamma > \frac{a}{a-1}$, it is necessarily the case that $\frac{(a-1)\gamma}{(a-1)\gamma - a} > 0$, hence the fact that $\alpha > \frac{1}{2} a \(1 - \frac{1}{\gamma}\)$ implies that
\begin{equation*}
k > \frac{1}{a-1} \(\frac{1}{2} a \(1 - \frac{1}{\gamma}\) \(\frac{(a-1)\gamma}{(a-1)\gamma - a}\) + \frac{1}{2}(a-2) - \frac{\frac{1}{2} a}{(a-1)\gamma - a} \) = 1,
\end{equation*}
which completes the estimate on $J_2$ and the proof of Lemma \ref{mainlemma}.

\section{Proof of Failure of Boundedness of the Maximal Operator for Regularity Below the Critical Index} \label{negsec}
To complete the proof of Theorem \ref{mainthm}, it remains to show that for $\gamma > 1$, the estimate $\Vert P^{*}_{a, \gamma} f \Vert_{L^2(\mathbb{R})} \lesssim \Vert f \Vert_{H^s(\mathbb{R})}$ cannot hold for $s < \frac{1}{4} a(1 - \frac{1}{\gamma})$.  In \cite{sjolin}, Sj\"olin proved this for $a = \gamma = 2$, generalising the aforementioned counterexample of Dahlberg and Kenig from \cite{dahlbergkenig}.  The proof given here is a further generalisation of this counterexample.

Fix $\gamma > 1$ and $s < \frac{1}{4} a (1 - \frac{1}{\gamma})$.  For each $v \in (0, v_0)$ for some small $v_0 > 0$, choose $g_v \in \mathcal{S}(\mathbb{R})$ to be a positive, even, real-valued function, supported in $[-v^{(a-1)-\frac{a}{\gamma}}, v^{(a-1) - \frac{a}{\gamma}}]$ and equal to $1$ on $[-\frac{1}{2} v^{(a-1) - \frac{a}{\gamma}}, \frac{1}{2} v^{(a-1) - \frac{a}{\gamma}}]$.  Define the function $f_v$ such that $\widehat{f_v}(\xi) = v g_v (v \xi + \frac{1}{v})$ and note that
\begin{eqnarray*}
\Vert f_v \Vert_{\dot{H}^s(\mathbb{R})}^2 &=& v^2 \int_{\mathbb{R}} \Big|g_v\Big(v \xi + \frac{1}{v}\Big)\Big|^2 |\xi|^{2s} \, d\xi\\*
&=& \frac{v^2}{v^{1+2s}} \int_{\mathbb{R}} \Big| g_v\Big(\xi + \frac{1}{v}\Big)\Big|^2 |\xi|^{2s} \, d\xi.
\end{eqnarray*}

For the integrand above to be non-zero, given the support of $g_v$, it is necessarily the case that $\xi + \frac{1}{v} \in [-v^{(a-1)-\frac{a}{\gamma}}, v^{(a-1) - \frac{a}{\gamma}}]$, hence $|\xi| \leq v^{(a-1) - \frac{a}{\gamma}} + \frac{1}{v}$.  Since $v \in (0, 1)$ and $(a-1) - \frac{a}{\gamma} > - 1$, given that $\gamma > 1$, it follows that $|\xi| \lesssim \frac{1}{v}$, so
\begin{equation*}
\Vert f_v \Vert_{\dot{H}^s(\mathbb{R})}^2 \lesssim v^{1-2s} v^{(a-1) - \frac{a}{\gamma}} v^{-2s} = v^{a - 4s - \frac{a}{\gamma}}.
\end{equation*}

Since $s < \frac{1}{4}a(1 - \frac{1}{\gamma})$ and $\Vert f_v \Vert_{H^s(\mathbb{R})} \sim \Vert f_v \Vert_{\dot{H}^s(\mathbb{R})}$, it can  be concluded that $\Vert f_v \Vert_{H^s(\mathbb{R})} \rightarrow 0$ as $v \rightarrow 0$.  It thus now suffices to show that there exists a choice of $t$, depending on $x$ and $v$, such that the $L^2(\mathbb{R})$ norm in $x$ of $P^t_{a, \gamma} f_v(x)$ is bounded below, uniformly in $v$.

Note first that
\begin{equation*}
P^t_{a, \gamma} f_v(x) = \int_{\mathbb{R}} e^{i(x \xi + t |\xi|^a)} e^{-t^{\gamma} |\xi|^a} v g_v\(v \xi + \frac{1}{v}\) \, d\xi.
\end{equation*}

Substituting $\eta = v\xi + \frac{1}{v}$ and removing a unimodular term that does not depend on $\eta$ from the integrand,
\begin{equation*}
|P^t_{a, \gamma} f_v(x)| = \Big|\int_{\mathbb{R}} e^{i(x \frac{\eta}{v} + t|\frac{\eta}{v} - \frac{1}{v^2}|^a)} e^{-t^{\gamma}|\frac{\eta}{v} - \frac{1}{v^2}|^a} g_v(\eta) \, d\eta \Big|.
\end{equation*}

Define
\begin{eqnarray*}
F_{x, t, v}(\eta) &\coloneq& x \frac{\eta}{v} + t\Big|\frac{\eta}{v} - \frac{1}{v^2}\Big|^a - \frac{t}{v^{2a}},\\
G_{t, v}(\eta) &\coloneq& t^{\gamma} \Big|\frac{\eta}{v} - \frac{1}{v^2}\Big|^a.
\end{eqnarray*}
Then it is clear, given the support of $g_v$, that
\begin{eqnarray*}
|P^t_{a, \gamma} f_v(x)| &=& \Big|\int_{-v^{(a-1) - \frac{a}{\gamma}}}^{v^{(a-1) - \frac{a}{\gamma}}} e^{iF_{x, t, v}(\eta)} e^{-G_{t, v}(\eta)} g_v(\eta) \, d\eta \Big|\\*
&\geq& \Big|\int_{-v^{(a-1) - \frac{a}{\gamma}}}^{v^{(a-1) - \frac{a}{\gamma}}} \cos(F_{x, t, v}(\eta)) e^{-G_{t, v}(\eta)} g_v(\eta) \, d\eta \Big|.
\end{eqnarray*}

By binomial expansion, for $|\eta| \leq v^{(a-1) - \frac{a}{\gamma}}$,
\begin{eqnarray*}
\Big|\frac{\eta}{v} - \frac{1}{v^2}\Big|^a &=& \(\frac{1}{v^2} - \frac{\eta}{v}\)^a\\*
&=& \frac{1}{v^{2a}} - \frac{a\eta}{v^{2(a-1) + 1}} + O\(\frac{\eta^2}{v^{2(a-2) + 2}}\),
\end{eqnarray*}
since $(a-1) - \frac{a}{\gamma} > -1$.  It follows that
\begin{equation*}
F_{x, t, v}(\eta) = x \frac{\eta}{v} - t a \frac{\eta}{v^{2a - 1}} + O\(\frac{t\eta^2}{v^{2(a-1)}}\).
\end{equation*}
Choose $x \in [0, v^{\frac{2a}{\gamma} - 2(a-1)}]$ and fix $t = \frac{xv^{2(a - 1)}}{a}$ (which is contained in $(0, 1)$, given the restriction on $x$).  Then
\begin{equation*}
F_{x, t, v}(\eta) = O (x \eta^2)
\end{equation*}
and hence
\begin{equation*}
F_{x, t, v}(\eta) \lesssim v^{\frac{2a}{\gamma} - 2(a-1) + 2((a-1) - \frac{a}{\gamma})} = 1.
\end{equation*}
For sufficiently small $v_0$, the implicit constant here may be set to $1$.

Additionally,
\begin{eqnarray*}
G_{t, v}(\eta) &=& x^{\gamma} v^{2\gamma(a - 1)} a^{-\gamma} O\(\frac{1}{v^{2a}}\)\\*
&=& O(x^{\gamma} v^{2a\gamma - 2\gamma - 2a}),
\end{eqnarray*}
hence
\begin{equation*}
G_{t, v}(\eta) \lesssim v^{2a - 2\gamma(a-1) + 2a\gamma - 2\gamma - 2a} = 1.
\end{equation*}

Given these estimates, it is clear that $\cos(F_{x, t, v}(\eta))$ and $e^{-G_{t, v}(\eta)}$ can be bounded below by constants for $|\eta| \leq v^{(a-1) - \frac{a}{\gamma}}$ and hence $|P^t_{a, \gamma} f_v(x)| \gtrsim v^{(a-1) - \frac{a}{\gamma}}$ for $x \in [0, v^{\frac{2a}{\gamma} - 2(a-1)}]$, so
\begin{equation*}
\Vert P^t_{a, \gamma} f_v \Vert_{L^2(\mathbb{R})}^2 \gtrsim v^{\frac{2a}{\gamma} - 2(a-1)} (v^{(a-1) - \frac{a}{\gamma}})^2 = 1
\end{equation*}
which completes the proof of the negative result.

\section{Further Remarks and the Proof of Theorem 1.3} \label{furtherremarks}
It is observed that in terms of determining the sharp exponent $s$ for which the estimate $\Vert P^{*}_{a, \gamma} f \Vert_{L^2(\mathbb{R})} \lesssim \Vert f \Vert_{H^s(\mathbb{R})}$ holds, the question of whether boundedness holds at the critical exponent, $s = \frac{1}{4} a (1 - \frac{1}{\gamma})$, for $a$, $\gamma > 1$, remains open.\footnote{For $a > 1$, $\gamma \in (0, 1]$, boundedness of $P^{*}_{a, \gamma}$ from $L^2(\mathbb{R})$ into $L^2(\mathbb{R})$ can be shown to hold by using similar methods to those given in the appendix to reduce the problem to boundedness of the Hardy--Littlewood maximal function in the case of $\gamma = 1$ and then applying Lemma \ref{boundinglemma} to conclude the same result for $\gamma \in (0, 1)$.}  In the case of the maximal operators with real-valued time, $S^{*}_a$, the problem of boundedness at the critical exponent, $s = \frac{a}{4}$ is also still open, even in the case of $a = 2$.  Nonetheless, it is remarked that the proof of boundedness of $P^{*}_{a, \gamma}$ given in Section \ref{possec} adapts without difficulty in the case of $\gamma = \frac{a}{a-1}$ to $s = \frac{1}{4} a (1 - \frac{1}{\gamma}) = \frac{1}{4}$.

A natural extension of Theorem \ref{mainthm} is to consider the values of $s$ for which a local norm bound on the maximal operator holds, that is to say
\begin{equation*}
\Vert P^{*}_{a, \gamma} f \Vert_{L^2([-1, 1])} \lesssim \Vert f \Vert_{H^s(\mathbb{R})}.
\end{equation*}
Denoting by $s_a^{\loc}(\gamma)$ the infimum of the values of $s > 0$ for which this estimate holds, the following analogue of Theorem \ref{mainthm} can be established:
\begin{thm} \label{mainthmloc}
For $\gamma \in (0, \infty)$ and $a > 1$, $s_a^{\loc}(\gamma) = \min\( \frac{1}{4} a \(1 - \frac{1}{\gamma}\)^{+}, \frac{1}{4} \)$.
\end{thm}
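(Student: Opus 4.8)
The plan is to establish Theorem~\ref{mainthmloc} by combining Theorem~\ref{mainthm} with an independent local estimate that handles the regime where the global exponent is larger than $\frac14$. Since $\Vert P^{*}_{a, \gamma} f \Vert_{L^2([-1,1])} \leq \Vert P^{*}_{a, \gamma} f \Vert_{L^2(\mathbb{R})}$, Theorem~\ref{mainthm} immediately gives $s_a^{\loc}(\gamma) \leq s_a(\gamma) = \frac14 a(1-\frac1\gamma)^{+}$, so the upper bound $s_a^{\loc}(\gamma) \leq \min(\frac14 a(1-\frac1\gamma)^{+}, \frac14)$ reduces to showing $s_a^{\loc}(\gamma) \leq \frac14$ for all $\gamma \in (0,\infty)$, $a > 1$. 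For this I would invoke the fact (due to Sj\"olin, as cited in the discussion preceding Theorem~\ref{mainthmloc} and used for Theorem~\ref{convcorl2}) that the \emph{real-time} local maximal estimate $\Vert S^{*}_a f \Vert_{L^2([-1,1])} \lesssim \Vert f \Vert_{H^s(\mathbb{R})}$ holds for all $s > \frac{a}{4}$ when... wait, more precisely: the relevant local estimate for $S^{*}_a$ in one dimension holds at $s > \frac14$ for $a \geq 2$ by Sj\"olin's 1987-type arguments, and for $1 < a < 2$ an analogous local bound at $s > \frac14$ is available. Granting such a local bound for $S^{*}_a$ at $s > \frac14$, Lemma~\ref{boundinglemma} with $g(t) = t^\gamma$ and $h \equiv 0$ (or rather comparing $t + it^\gamma$ against $t + i\cdot 0 = t$) is the wrong direction; instead one argues directly: the damping factor $e^{-t^\gamma |\xi|^a}$ only helps, so $P^{*}_{a,\gamma}$ is pointwise dominated in the relevant sense by $S^{*}_a$ composed with a harmless Fourier multiplier, yielding $s_a^{\loc}(\gamma) \leq \frac14$. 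Combined with the remark after Theorem~\ref{mainthm} that the Section~\ref{possec} argument reaches $s = \frac14$ when $\gamma = \frac{a}{a-1}$, this settles the upper bound.

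For the lower bound $s_a^{\loc}(\gamma) \geq \min(\frac14 a(1-\frac1\gamma)^{+}, \frac14)$, I would revisit the counterexample construction of Section~\ref{negsec}. When $\frac14 a(1-\frac1\gamma) \leq \frac14$, i.e.\ $\gamma \leq \frac{a}{a-1}$, the functions $f_v$ built there already have their mass of $P^t_{a,\gamma} f_v$ concentrated on $x \in [0, v^{\frac{2a}{\gamma} - 2(a-1)}]$, an interval shrinking to a subset of $[-1,1]$ as $v \to 0$; hence $\Vert P^{*}_{a,\gamma} f_v\Vert_{L^2([-1,1])}^2 \gtrsim 1$ while $\Vert f_v\Vert_{H^s} \to 0$, so $s_a^{\loc}(\gamma) \geq \frac14 a(1-\frac1\gamma)^{+}$ verbatim from Section~\ref{negsec} in this range (and trivially for $\gamma \leq 1$). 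The case $\gamma > \frac{a}{a-1}$, where $\min(\cdots,\cdot) = \frac14$, requires a different example: here I would adapt the Dahlberg--Kenig counterexample for the \emph{real} Schr\"odinger-type operator (which shows $s_a^{\loc} \geq \frac14$ when $a = 2$, and more generally $s_a^{\loc}(\gamma)\geq \frac14$ is what is needed). The key point is that for $\gamma$ large the damping $e^{-t^\gamma|\xi|^a}$ is negligible on the frequency scale of the example when $t$ is chosen of the right (small) size, so the known real-time counterexample survives the perturbation. Concretely, one picks $f_v$ with $\widehat{f_v}$ a bump of width $\sim v^{a-1}$ centred at frequency $\sim v^{-1}$ (the $\gamma = \infty$ analogue of the Section~\ref{negsec} construction), chooses $t \sim v^{2(a-1)} x$ with $x \in [0,1]$, checks that $t^\gamma |\xi|^a \lesssim 1$ on the support because $\gamma > \frac{a}{a-1}$ forces the exponent of $v$ to be positive, and concludes $\Vert P^t_{a,\gamma}f_v\Vert_{L^2([0,1])}^2 \gtrsim 1$ with $\Vert f_v\Vert_{H^s} \sim v^{\frac{a}{2}-2s} \to 0$ for $s < \frac14$.

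The main obstacle I anticipate is the upper bound $s_a^{\loc}(\gamma) \leq \frac14$ for the full range $1 < a$, $\gamma > 0$: one must either cite or reprove a local $L^2([-1,1])$ maximal estimate for a dispersive operator with phase $|\xi|^a$ at regularity $s > \frac14$, uniformly in the extra damping parameter. For $a = 2$ this is classical (Carleson--Sj\"olin), and for general $a > 1$ the needed statement should follow from the one-dimensional local smoothing / maximal estimates in Sj\"olin's earlier work, but care is required for $1 < a < 2$ where the curvature of $\xi \mapsto |\xi|^a$ degenerates at the origin — though on the local interval $[-1,1]$ and after removing low frequencies (which are handled trivially, as in the $I_0$/$A$ terms of Section~\ref{possec}) this degeneration is harmless. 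A secondary, more routine obstacle is verifying in the $\gamma > \frac{a}{a-1}$ counterexample that \emph{all} error terms in the binomial/Taylor expansions of the phase and of $t^\gamma|\cdot|^a$ remain $O(1)$ on the chosen frequency and $x$-ranges; this is a direct book-keeping computation exactly parallel to the one carried out in Section~\ref{negsec}, with the single new inequality $\gamma > \frac{a}{a-1}$ replacing $\gamma > 1$ to guarantee positivity of the relevant exponents of $v$.
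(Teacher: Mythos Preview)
Your proposal matches the paper on the easy halves: the bound $s_a^{\loc}(\gamma)\leq s_a(\gamma)$ from Theorem~\ref{mainthm}, and the lower bound for $\gamma\leq\frac{a}{a-1}$ via the observation that the Section~\ref{negsec} example already lives in $x\in[0,1]$. The two remaining pieces both have genuine gaps.

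\textbf{Upper bound $s_a^{\loc}(\gamma)\leq\tfrac14$.} The heuristic ``the damping factor only helps'' does not give a pointwise comparison $|P^t_{a,\gamma}f(x)|\lesssim|S^t_af(x)|$; that is simply false. The correct direction of Lemma~\ref{boundinglemma} (with $g\equiv0$, $h(t)=t^{\gamma}$) does yield $P^{*}_{a,\gamma}f\lesssim M(S^{*}_af)$ pointwise, but this does not localise: to bound $\Vert M(S^{*}_af)\Vert_{L^2([-1,1])}$ you would need control of $S^{*}_af$ on all of $\mathbb{R}$, and at regularity $\frac14$ only the local estimate $\Vert S^{*}_af\Vert_{L^2([-1,1])}\lesssim\Vert f\Vert_{H^{1/4}}$ is available. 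The paper avoids this entirely: it simply notes that in the proof of Lemma~\ref{mainlemma}, when $|x|\leq C_0$ and $\gamma\geq\frac{a}{a-1}$, the only constraint on $\alpha$ actually used is $\alpha\geq\frac12$ (in the $J_2$ estimate). Thus the kernel estimate already gives $K\in L^1([-C_0,C_0])$ for $\alpha=\frac12$, and the local bound follows for every $s\geq\frac14$ directly from Section~\ref{possec}.

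\textbf{Lower bound $\geq\tfrac14$ for $\gamma>\tfrac{a}{a-1}$.} Your proposed example, a bump of width $v^{a-1}$ at frequency $v^{-1}$, does not give the sharp exponent. With that width one gets $|P^t_{a,\gamma}f_v(x)|\gtrsim v^{a-1}$ on $[0,1]$, so $\Vert P^{*}_{a,\gamma}f_v\Vert_{L^2([0,1])}^2\gtrsim v^{2(a-1)}$, not $\gtrsim 1$ as you claim; the resulting ratio with $\Vert f_v\Vert_{H^s}^2\sim v^{a-4s}$ is $v^{a-2+4s}$, which fails to blow up unless $s<\tfrac{2-a}{4}$. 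The width is too small: a Dahlberg--Kenig example at frequency $N$ needs width $\sim N^{1/2}$, so that the quadratic term in the phase expansion is the binding constraint. In fact the paper does not build a new example at all. It observes that $G_{t,v}(\eta)=t^{\gamma}|\frac{\eta}{v}-\frac{1}{v^2}|^a$ is nonincreasing in $\gamma$ (since $t\in(0,1)$), so the Section~\ref{negsec} example at the borderline value $\gamma=\frac{a}{a-1}$ --- where the $\eta$-width is $1$, the $x$-range is exactly $[0,1]$, and the lower bound $\Vert P^{*}f_v\Vert_{L^2([0,1])}\gtrsim 1$ genuinely holds --- works verbatim for every larger $\gamma$.
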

\begin{proof}
Observe that the global bounds from Theorem \ref{mainthm} automatically imply local bounds, so it is necessarily the case that $s_a^{\loc}(\gamma) \leq s_a(\gamma) = \frac{1}{4} a (1 - \frac{1}{\gamma})^{+}$.  Additionally, note that the counterexample given in Section \ref{negsec} is also a counterexample for the local estimate whenever the choices of $x$ are contained within $[-1, 1]$.  Since $x$ is chosen to be in $[0, v^{\frac{2a}{\gamma} - 2(a-1)}]$ for some small parameter $v$, this happens precisely when $\frac{2a}{\gamma} - 2(a-1) \geq 0$, that is when $\gamma \leq \frac{a}{a-1}$.  It follows that $s_a^{\loc}(\gamma) = \frac{1}{4} a (1 - \frac{1}{\gamma})^{+}$ for $\gamma \in (0, \frac{a}{a-1}]$.

In the proof of Lemma \ref{mainlemma}, if $\gamma \geq \frac{a}{a-1}$ and $x$ is small, the only requirement on $\alpha$ is that it is greater than or equal to $\frac{1}{2}$ (used in the estimate on $J_2$).  Consequently, for such $\gamma$, it must be the case that $s_a^{\loc}(\gamma) \leq \frac{1}{4}$.  Since the function $G_{t, v}(\eta)$ from Section \ref{negsec} is non-increasing in $\gamma$ (owing to the locality of $t$), the counterexample for $\gamma = \frac{a}{a-1}$, which shows that $s_a^{\loc}(\frac{a}{a-1}) \geq \frac{1}{4}$, also provides that $s_a^{\loc}(\gamma) \geq \frac{1}{4}$ for all $\gamma \geq \frac{a}{a-1}$, and so the theorem is established.
\end{proof}
It is remarked that the upper bound of $\frac{1}{4}$ for $s_a^{\loc}(\gamma)$ is perhaps unsurprising here in light of the fact that in 1987, Sj\"olin proved in \cite{sjolinarticle} that for all $a > 1$,
\begin{equation*}
\Vert S^{*}_a f \Vert_{L^2([-1, 1])} \lesssim \Vert f \Vert_{H^s(\mathbb{R})}
\end{equation*}
if and only if $s \geq \frac{1}{4}$.

In the case of this local problem, it can further be seen that $\Vert P^{*}_{a, \gamma} f \Vert_{L^2([-1, 1])} \lesssim \Vert f \Vert_{H^{\frac{1}{4}}(\mathbb{R})}$ holds for any $\gamma \geq \frac{a}{a-1}$ and $\Vert P^{*}_{a, \gamma} f \Vert_{L^2([-1, 1])} \lesssim \Vert f \Vert_{L^2(\mathbb{R})}$ holds for any $\gamma \in (0, 1]$ (that is boundedness holds at the critical index in these cases).  Given Theorem \ref{mainthmloc} together with this remark, the positive statements of Theorem \ref{convcorl2} follow from standard arguments deducing pointwise convergence from boundedness of maximal functions.  The negative statements are a consequence of the Nikishin--Stein maximal principle\footnote{This principle establishes that for appropriate operators, pointwise convergence results are in fact equivalent to weak bounds on maximal operators.  See also \cite{stein} and \cite{guzman}.}, as given in \cite{nikishin} and as applied by Dahlberg and Kenig in \cite{dahlbergkenig}, and the fact that the counterexample in Section \ref{negsec} can also be used to show failure of boundedness from $H^s(\mathbb{R})$ into $L^{2, \infty}([-1, 1])$.

\section*{Appendix: Proof of Lemma 1.4} \label{lemmaproof}
Observe that
\begin{eqnarray*}
S_a^{t + ih(t)} f(x) &=& \int_{\mathbb{R}} \fhat(\xi) e^{it|\xi|^a} e^{-h(t)|\xi|^a} e^{ix\xi} \, d\xi\\*
&=& \int_{\mathbb{R}} \fhat(\xi) e^{it|\xi|^a} e^{-g(t)|\xi|^a} e^{-(h(t) - g(t))|\xi|^a} e^{ix\xi} \, d\xi.
\end{eqnarray*}
Define $K: \mathbb{R} \rightarrow \mathbb{R}$ such that $\widehat{K}(\xi) = e^{-|\xi|^a}$ and for each $t \in [0, 1]$, define $K_t \coloneq t^{-1} K(t^{-1} \cdot)$, so that $\widehat{K_t} = \widehat{K}(t \cdot)$.  Then
\begin{eqnarray*}
S_a^{t + ih(t)} f(x) &=& \int_{\mathbb{R}} \fhat(\xi) e^{it|\xi|^a} e^{-g(t)|\xi|^a} \Big( \int_{\mathbb{R}} K_{(h(t) - g(t))^{\frac{1}{a}}}(y) e^{-iy\xi} \, dy \Big) e^{ix\xi} \, d\xi\\*
&=& \int_{\mathbb{R}} \Big( \int_{\mathbb{R}} \fhat(\xi) e^{it|\xi|^a} e^{-g(t)|\xi|^a} e^{i(x-y)\xi} \, d\xi \Big) K_{(h(t) - g(t))^{\frac{1}{a}}}(y) \, dy\\
&=& (S_a^{t + ig(t)}f) * K_{(h(t) - g(t))^{\frac{1}{a}}}(x),
\end{eqnarray*}
so
\begin{equation*}
\sup_{t \in (0, 1)} |S_a^{t + ih(t)} f(x)| = \sup_{t \in (0, 1)} |(S_a^{t + ig(t)}f) * K_{(h(t) - g(t))^{\frac{1}{a}}} (x)| \leq \sup_{u \in (0, 1)} | (\sup_{t \in (0, 1)} |S_a^{t + ig(t)} f|) * K_u(x) |.
\end{equation*}
Now,
\begin{equation*}
K(x) = \frac{1}{2\pi} \int_{\mathbb{R}} e^{-|\xi|^a} e^{ix\xi} \, d\xi = \frac{1}{\pi} \int_0^{\infty} e^{-\xi^a} \cos(x\xi) \, d\xi.
\end{equation*}
Since $\int_0^{\infty} e^{-\xi^a} \, d\xi = \Gamma(\frac{1}{a} + 1) < \infty$, it is clear that $K \in L^{\infty}(\mathbb{R})$.  Further, integrating by parts twice and using that for any $c > -1$ it is also true that $\int_0^{\infty} \xi^c e^{-\xi^a} \, d\xi = \frac{1}{a} \Gamma(\frac{c+1}{a}) < \infty$, it follows for any $x \neq 0$ that $|K(x)| \lesssim |x|^{-2}$.  It is thus the case that
\begin{equation*}
\sup_{u \in (0, 1)} | (\sup_{t \in (0, 1)} |S_a^{t + ig(t)} f|) * K_u(x) | \lesssim M(\sup_{t \in (0, 1)} |S_a^{t + ig(t)} f|)(x),
\end{equation*}
where $M$ is the Hardy--Littlewood maximal function.  By boundedness of $M$ on $L^2(\mathbb{R})$, the lemma follows.

\renewcommand{\bibname}{Bibliography} 

\bibliographystyle{adb}
\bibliography{complschr}

\end{document}